\theoremstyle{plain}
\newtheorem{theorem}{Theorem}[section]
\newtheorem{corollary}[theorem]{Corollary}
\newtheorem{lemma}[theorem]{Lemma}
\newtheorem{proposition}[theorem]{Proposition}
\theoremstyle{definition}
\newtheorem{example}[theorem]{Example}
\numberwithin{equation}{section}
\newcommand{\R}{{\mathbb R}}
\newcommand{\N}{{\mathbb N}}
\newcommand{\Om}{\Omega}
\providecommand{\vint}[1]{\mathchoice
          {\mathop{\vrule width 5pt height 3 pt depth -2.5pt
                  \kern -9pt \kern 1pt\intop}\nolimits_{\kern -5pt{#1}}}
          {\mathop{\vrule width 5pt height 3 pt depth -2.6pt
                  \kern -6pt \intop}\nolimits_{\kern -3pt{#1}}}
          {\mathop{\vrule width 5pt height 3 pt depth -2.6pt
                  \kern -6pt \intop}\nolimits_{\kern -3pt{#1}}}
          {\mathop{\vrule width 5pt height 3 pt depth -2.6pt
                  \kern -6pt \intop}\nolimits_{\kern -3pt{#1}}}}
\newcommand{\eps}{\varepsilon}
\newcommand{\loc}{\mathrm{loc}}
\newcommand{\BV}{\mathrm{BV}}
\newcommand{\liploc}{\mathrm{Lip}_{\mathrm{loc}}}
\newcommand{\ch}{\text{\raise 1.3pt \hbox{$\chi$}\kern-0.2pt}}
\newcommand{\niton}{\not\owns}
\DeclareMathOperator{\capa}{Cap}
\DeclareMathOperator{\dist}{dist}
\DeclareMathOperator{\Lip}{Lip}
\def\XXint#1#2#3{{\setbox0=\hbox{$#1{#2#3}{\int}$}
\vcenter{\hbox{$#2#3$}}\kern-.5\wd0}}
\begin{document}
\title{Strict and pointwise convergence of
\\ $\BV$ functions in metric spaces
\footnote{{\bf 2010 Mathematics Subject Classification}: 30L99, 26B30, 28A20.
\hfill \break {\it Keywords\,}: metric measure space, bounded variation,
strict convergence, pointwise convergence, uniform convergence, codimension one Hausdorff measure
}}
\author{Panu Lahti}
\maketitle

\begin{abstract}
In the setting of a metric space $X$ equipped with a doubling measure that supports a Poincar\'e inequality, we show that if $u_i\to u$ strictly in $\BV(X)$, i.e. if $u_i\to u$ in $L^1(X)$ and $\Vert Du_i\Vert(X)\to\Vert Du\Vert(X)$, then for a subsequence (not relabeled) we have $\widetilde{u}_i(x)\to \widetilde{u}(x)$ for $\mathcal H$-almost every $x\in X\setminus S_u$.
\end{abstract}

\section{Introduction}

Let $(X,d,\mu)$ be a metric space equipped with a doubling measure $\mu$ that supports a  $(1,1)$-Poincar\'e inequality. Take a sequence of $\BV$ functions $(u_i)\subset \BV(X)$.
If $u_i\to u$ in $L^1(X)$, then of course for a subsequence (not relabeled) we have $u_i(x)\to u(x)$ for $\mu$-almost every $x\in X$.
If $X=\R^n$ and the functions $u_i$ are defined as convolutions of $u\in\BV(\R^n)$ with a mollifier function at smaller and smaller scales,
then $u_i(x)\to \widetilde{u}(x)$ for $\mathcal H$-almost every
$x\in \R^n$ by \cite[Corollary 3.80]{AFP} (where $\mathcal H$ is the codimension one, or $n-1$-dimensional Hausdorff measure; see Section \ref{sec:preliminaries} for notation).  See also \cite[Proposition 4.1]{KKST3}
for a slightly weaker analogous result in the metric setting.

In this paper we consider what kind of pointwise convergence can be obtained if
we know that $u_i\to u$ \emph{strictly} in $\BV(\Omega)$, that is, $u_i\to u$ in $L^1(\Omega)$ and $\Vert Du_i\Vert(\Omega)\to \Vert Du\Vert(\Omega)$,
where $\Om\subset X$ is an open set.
We show that in this case, for a subsequence (not relabeled) we have $\widetilde{u}_i(x)\to \widetilde{u}(x)$ for $\mathcal H$-almost every $x\in \Omega\setminus S_u$. This is given in Corollary \ref{cor:strict to pointwise convergence}.
We also show that in any compact subset of $\Omega\setminus S_u$, we can obtain uniform convergence outside sets of small $1$-capacity.
This is given in Corollary \ref{cor:uniform convergence outside jump set}.
Somewhat more general formulations of these results are given in Theorem \ref{thm:strict to pointwise convergence} and Theorem \ref{thm:uniform convergence outside jump set}.

Our results seem to be new even in the Euclidean setting.
The main tool used in the proofs is a boxing inequality -type argument, which has been previously applied in e.g. \cite{KKST2}.
At the end of the paper we give examples that demonstrate that the results appear to be
optimal.

\section{Preliminaries}\label{sec:preliminaries}

In this section we introduce the necessary notation and assumptions.

In this paper, $(X,d,\mu)$ is a complete metric space equipped
with a Borel regular outer measure $\mu$ satisfying a doubling property, that is,
there is a constant $C_d\ge 1$ such that
\[
0<\mu(B(x,2r))\leq C_d\,\mu(B(x,r))<\infty
\]
for every ball $B=B(x,r)$ with center $x\in X$ and radius $r>0$.
By iterating the doubling property, we obtain that for any $x\in X$ and $y\in B(x,R)$ with $0<r\le R<\infty$, we have
\begin{equation}\label{eq:homogenous dimension}
\frac{\mu(B(y,r))}{\mu(B(x,R))}\ge \frac{1}{C_d^2}\left(\frac{r}{R}\right)^{Q},
\end{equation}
where $Q>0$ only depends on the doubling constant $C_d$. When we want to specify that a constant $C$
depends on the parameters $a,b, \ldots,$ we write $C=C(a,b,\ldots)$.

A complete metric space with a doubling measure is proper,
that is, closed and bounded sets are compact. Since $X$ is proper, for any open set $\Omega\subset X$
we define $\liploc(\Omega)$ to be the space of
functions that are Lipschitz in every $\Omega'\Subset\Omega$.
Here $\Omega'\Subset\Omega$ means that $\Omega'$ is open and that $\overline{\Omega'}$ is a
compact subset of $\Omega$. Other local spaces of functions are defined similarly.

For any set $A\subset X$ and $0<R<\infty$, the Hausdorff content
of codimension one is defined by
\[
\mathcal{H}_{R}(A):=\inf\left\{ \sum_{i=1}^{\infty}
  \frac{\mu(B(x_{i},r_{i}))}{r_{i}}:\,A\subset\bigcup_{i=1}^{\infty}B(x_{i},r_{i}),\,r_{i}\le R\right\}.
\]
The codimension one Hausdorff measure of a set $A\subset X$ is given by
\[
\mathcal{H}(A):=\lim_{R\rightarrow 0}\mathcal{H}_{R}(A).
\]

The measure theoretic boundary $\partial^{*}E$ of a set $E\subset X$ is the set of points $x\in X$
at which both $E$ and its complement have positive upper density, i.e.
\[
\limsup_{r\to 0}\frac{\mu(B(x,r)\cap E)}{\mu(B(x,r))}>0\quad\;
  \textrm{and}\quad\;\limsup_{r\to 0}\frac{\mu(B(x,r)\setminus E)}{\mu(B(x,r))}>0.
\]
The measure theoretic interior and exterior of $E$ are defined respectively by
\[
I_E:=\left\{x\in X:\,\lim_{r\to 0}\frac{\mu(B(x,r)\setminus E)}{\mu(B(x,r))}=0\right\}
\]
and
\[
O_E:=\left\{x\in X:\,\lim_{r\to 0}\frac{\mu(B(x,r)\cap E)}{\mu(B(x,r))}=0\right\}.
\]
Note that the space is always partitioned into the disjoint sets $\partial^*E$, $I_E$, and
$O_E$.

A curve $\gamma$ is a rectifiable continuous mapping from a compact interval
into $X$.
A nonnegative Borel function $g$ on $X$ is an upper gradient 
of an extended real-valued function $u$
on $X$ if for all curves $\gamma$ on $X$, we have
\[
|u(x)-u(y)|\le \int_\gamma g\,ds,
\]
where $x$ and $y$ are the end points of $\gamma$. We interpret $|u(x)-u(y)|=\infty$ whenever  
at least one of $|u(x)|$, $|u(y)|$ is infinite. Upper gradients were originally introduced in~\cite{HK}.

We consider the following norm
\[
\Vert u\Vert_{N^{1,1}(X)}:=\Vert u\Vert_{L^1(X)}+\inf \Vert g\Vert_{L^1(X)},
\]
where the infimum is taken over all upper gradients $g$ of $u$.
The substitute for the Sobolev space $W^{1,1}(X)$ in the metric setting is the Newton-Sobolev space
\[
N^{1,1}(X):=\{u:\|u\|_{N^{1,1}(X)}<\infty\}.
\]
For more on Newton-Sobolev spaces, we refer to~\cite{S, BB}.

The $1$-capacity of a set $A\subset X$ is given by
\begin{equation}\label{eq:definition of p-capacity}
 \capa_1(A):=\inf \Vert u\Vert_{N^{1,1}(X)},
\end{equation}
where the infimum is taken over all functions $u\in N^{1,1}(X)$ such that $u\ge 1$ in $A$.
We know that $\capa_1$ is an outer capacity, meaning that
\[
\capa_1(A)=\inf\{\capa_1(U):\,U\supset A\textrm{ is open}\}
\]
for any $A\subset X$, see e.g. \cite[Theorem 5.31]{BB}.

Next we recall the definition and basic properties of functions
of bounded variation on metric spaces, see \cite{M}. See also e.g. \cite{AFP, EvaG92, Giu84, Zie89} for the classical 
theory in the Euclidean setting.
For $u\in L^1_{\loc}(X)$, we define the total variation of $u$ in $X$ to be 
\[
\|Du\|(X):=\inf\left\{\liminf_{i\to\infty}\int_X g_{u_i}\,d\mu:\, u_i\in \Lip_{\loc}(X),\, u_i\to u\textrm{ in } L^1_{\loc}(X)\right\},
\]
where each $g_{u_i}$ is an upper gradient of $u_i$.
We say that a function $u\in L^1(X)$ is of bounded variation, 
and denote $u\in\BV(X)$, if $\|Du\|(X)<\infty$.
By replacing $X$ with an open set $\Omega\subset X$ in the definition of the total variation, we can define $\|Du\|(\Omega)$.
For an arbitrary set $A\subset X$, we define
\[
\|Du\|(A)=\inf\{\|Du\|(\Omega):\, A\subset\Omega,\,\Omega\subset X
\text{ is open}\}.
\]
If $u\in\BV(\Omega)$, $\|Du\|(\cdot)$ is a finite Radon measure on $\Omega$ by \cite[Theorem 3.4]{M}.
The $\BV$ norm is defined by
\[
\Vert u\Vert_{\BV(\Omega)}:=\Vert u\Vert_{L^1(\Omega)}+\Vert Du\Vert(\Omega).
\]
A $\mu$-measurable set $E\subset X$ is said to be of finite perimeter if $\|D\ch_E\|(X)<\infty$, where $\ch_E$ is the characteristic function of $E$.
The perimeter of $E$ in $\Omega$ is also denoted by
\[
P(E,\Omega):=\|D\ch_E\|(\Omega).
\]
Similarly as above, if $P(E,\Omega)<\infty$, then $P(E,\cdot)$ is a finite Radon measure
on $\Omega$.
For any Borel sets $E_1,E_2\subset X$ we have by \cite[Proposition 4.7]{M}
\[
P(E_1\cap E_2,X)+P(E_1\cup E_2,X)\le P(E_1,X)+P(E_2,X). 
\]
The proof works equally well for $\mu$-measurable $E_1,E_2\subset X$ and with $X$
replaced by any open set $U\subset X$, so that
\begin{equation}\label{eq:Caccioppoli sets form an algebra}
P(E_1\cap E_2,U)+P(E_1\cup E_2,U)\le P(E_1,U)+P(E_2,U). 
\end{equation}
We have the following coarea formula from~\cite[Proposition 4.2]{M}: if $\Omega\subset X$ is an open set and $v\in L^1_{\loc}(\Omega)$, then
\begin{equation}\label{eq:coarea}
\|Dv\|(\Omega)=\int_{-\infty}^{\infty}P(\{v>t\},\Omega)\,dt.
\end{equation}

We will assume throughout that $X$ supports a $(1,1)$-Poincar\'e inequality,
meaning that there exist constants $C_P\ge 1$ and $\lambda \ge 1$ such that for every
ball $B(x,r)$, every $u\in L^1_{\loc}(X)$,
and every upper gradient $g$ of $u$, we have 
\[
\vint{B(x,r)}|u-u_{B(x,r)}|\, d\mu 
\le C_P r\vint{B(x,\lambda r)}g\,d\mu,
\]
where 
\[
u_{B(x,r)}:=\vint{B(x,r)}u\,d\mu :=\frac 1{\mu(B(x,r))}\int_{B(x,r)}u\,d\mu.
\]
By applying the Poincar\'e inequality to approximating locally Lipschitz functions in the definition of the total variation, 
we get the following $(1,1)$-Poincar\'e inequality for $\BV$ functions.
For every ball $B(x,r)$ and every 
$u\in L^1_{\loc}(X)$, we have
\[
\vint{B(x,r)}|u-u_{B(x,r)}|\,d\mu
\le C_P r\, \frac{\Vert Du\Vert (B(x,\lambda r))}{\mu(B(x,\lambda r))}.
\]
For a $\mu$-measurable set $E\subset X$, the above implies the relative isoperimetric inequality
\begin{equation}\label{eq:relative isoperimetric inequality}
\min\{\mu(B(x,r)\cap E),\,\mu(B(x,r)\setminus E)\}\le 2 C_P rP(E,B(x,\lambda r)).
\end{equation}

Given a set of finite perimeter $E\subset X$, for $\mathcal H$-almost every $x\in \partial^*E$ we have
\begin{equation}\label{eq:definition of gamma}
\gamma \le \liminf_{r\to 0} \frac{\mu(B(x,r)\cap E)}{\mu(B(x,r))} \le \limsup_{r\to 0} \frac{\mu(B(x,r)\cap E)}{\mu(B(x,r))}\le 1-\gamma,
\end{equation}
where $\gamma \in (0,1/2]$ only depends on the doubling constant and the constants in the Poincar\'e inequality, 
see~\cite[Theorem 5.4]{A1}.
For an open set $\Omega\subset X$ and a $\mu$-measurable set $E\subset X$
with $P(E,\Omega)<\infty$, we know that for any Borel set $A\subset \Omega$
\begin{equation}\label{eq:def of theta}
P(E,A)=\int_{\partial^{*}E\cap A}\theta_E\,d\mathcal H,
\end{equation}
where
$\theta_E\colon X\to [\alpha,C_d]$ with $\alpha=\alpha(C_d,C_P,\lambda)>0$, see \cite[Theorem 5.3]{A1} 
and \cite[Theorem 4.6]{AMP}.

The jump set of a $\mu$-measurable function $u$ on $X$ is defined by
\[
S_{u}:=\{x\in X:\, u^{\wedge}(x)<u^{\vee}(x)\},
\]
where $u^{\wedge}(x)$ and $u^{\vee}(x)$ are the lower and upper approximate limits of $u$ defined by
\[
u^{\wedge}(x):
=\sup\left\{t\in\overline\R:\,\lim_{r\to 0}\frac{\mu(B(x,r)\cap\{u<t\})}{\mu(B(x,r))}=0\right\}
\]
and
\[
u^{\vee}(x):
=\inf\left\{t\in\overline\R:\,\lim_{r\to 0}\frac{\mu(B(x,r)\cap\{u>t\})}{\mu(B(x,r))}=0\right\}.
\]

Note that for $u=\ch_E$, we have $x\in I_E$ if and only if $u^{\wedge}(x)=u^{\vee}(x)=1$, $x\in O_E$ if and only if $u^{\wedge}(x)=u^{\vee}(x)=0$, and $x\in \partial^*E$ if and only if $u^{\wedge}(x)=0$ and $u^{\vee}(x)=1$. 

We understand $\BV$ functions to be $\mu$-equivalence classes. To consider 
pointwise properties, we need to consider the representatives $u^{\wedge}$ and
$u^{\vee}$. We also define the representative
\[
\widetilde{u}:=(u^{\wedge}+u^{\vee})/2.
\]

We say that
a set $A\subset X$ is \emph{$1$-quasiopen} if for every $\eps>0$ there is an
open set $G\subset X$ with $\capa_1(G)<\eps$ such that $A\cup G$ is open.

\section{The convergence results}

In this section we give our main results: Theorem \ref{thm:strict to pointwise convergence}
on the pointwise convergence of $\BV$ functions, and Theorem \ref{thm:uniform convergence outside jump set}, given at the end of the section, on uniform convergence.

The following fact about the Hausdorff content and measure is well known in the Euclidean setting, and proved in the metric setting in the below reference. See also \cite[Lemma 7.9]{KKST2} for a previous similar result.

\begin{lemma}[{\cite[Lemma 3.7]{L2}}]\label{lem:Hausdorff content and measure}
Let $R>0$. If $A\subset X$ and $\mathcal H_R(A)=0$, then $\mathcal H(A)=0$.
\end{lemma}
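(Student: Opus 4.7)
The plan is to show that $\mathcal H_{R'}(A)=0$ for every $R'\in(0,R]$; letting $R'\to 0$ then gives $\mathcal H(A)=0$. The subtlety is that the codimension-one content sum $\sum\mu(B_i)/r_i$ uses $\mu(B)/r$ rather than a power $r^s$, so the standard $\mathcal H^s$ trick of inferring $r_i<\eps^{1/s}$ from $\sum r_i^s<\eps$ is not directly available. Instead, I would control, at the level of a single ``oversized'' ball, the effect of subdividing it into balls of radius $R'$.

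Fix $R'\in(0,R]$ and $\eta>0$. By hypothesis, for any $\eps>0$ I can pick a cover $\{B(x_i,r_i)\}_i$ of $A$ with $r_i\le R$ and $\sum_i\mu(B(x_i,r_i))/r_i<\eps$. Split the indices into $I_1:=\{i:r_i\le R'\}$ (already admissible for $\mathcal H_{R'}$) and $I_2:=\{i:R'<r_i\le R\}$ (to be refined).

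For each $i\in I_2$, I would take a maximal $R'$-separated set $\{y_j^{(i)}\}_j\subset B(x_i,r_i)$, so that the balls $\{B(y_j^{(i)},R'/2)\}_j$ are pairwise disjoint and $\{B(y_j^{(i)},R')\}_j$ covers $B(x_i,r_i)$. Since $R'/2<r_i$, the disjoint balls lie in $B(x_i,2r_i)$, so doubling yields
\[
\sum_j \mu(B(y_j^{(i)},R'/2))\le\mu(B(x_i,2r_i))\le C_d\,\mu(B(x_i,r_i)),
\]
and one further doubling gives $\mu(B(y_j^{(i)},R'))\le C_d\,\mu(B(y_j^{(i)},R'/2))$. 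Summing and dividing by $R'$,
\[
\sum_j\frac{\mu(B(y_j^{(i)},R'))}{R'}\le C_d^2\,\frac{\mu(B(x_i,r_i))}{R'}\le C_d^2\,\frac{R}{R'}\cdot\frac{\mu(B(x_i,r_i))}{r_i}.
\]

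Keeping the balls indexed by $I_1$ and replacing each $i\in I_2$ by its refinement gives a cover of $A$ by balls of radius $\le R'$ with total content at most $\eps+C_d^2(R/R')\eps=(1+C_d^2R/R')\eps$. Choosing $\eps<\eta/(1+C_d^2R/R')$ then yields $\mathcal H_{R'}(A)<\eta$, so $\mathcal H_{R'}(A)=0$. The main obstacle is the radius-inflation bound in the last display; it crucially uses that $r_i\le R$ is uniformly bounded, for otherwise the factor $r_i/R'$ could blow up the content arbitrarily under subdivision. The rest is routine doubling and Vitali bookkeeping.
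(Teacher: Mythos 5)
Your argument is correct and is the natural subdivision proof one would expect for this statement (the paper itself cites the result to \cite[Lemma~3.7]{L2} rather than reproving it). The core mechanism — refine each oversized ball $B(x_i,r_i)$ into finitely many balls of radius $R'$ via a maximal $R'$-separated net, use disjointness of the half-radius balls plus doubling to control the total mass, and absorb the unavoidable factor $r_i/R'\le R/R'$ into the arbitrarily small $\varepsilon$ — is exactly the standard route, and your observation that the uniform bound $r_i\le R$ is the crucial ingredient (since, unlike for $\mathcal H^s$ with $s>0$, smallness of the codimension-one content does not force the covering radii to be small) correctly identifies where the content of the lemma lives. One small bookkeeping point you glossed over: the net in each $B(x_i,r_i)$ is finite because a complete doubling space is proper, so bounded sets are totally bounded; without that, "maximal $R'$-separated set" would need Zorn and the resulting family might a priori be uncountable. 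With that noted, the proof is complete.
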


\begin{theorem}\label{thm:strict to pointwise convergence}
Let $\Omega\subset X$ be an open set, and let $u_i,u\in \BV(\Omega)$ such that $u_i\to u$ in $L^1(\Omega)$ and $\Vert Du_i\Vert(\Omega)\to \Vert Du\Vert(\Omega)$. Then there exists a subsequence (not relabeled) such that for $\mathcal H$-almost every $x\in \Omega$,
\[
u^{\wedge}(x)\le\liminf_{i\to\infty}u_i^{\wedge}(x)
\le
\limsup_{i\to\infty}u_i^{\vee}(x)\le u^{\vee}(x).
\]
\end{theorem}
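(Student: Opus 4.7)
The plan is to reduce the statement to a question about superlevel sets via the coarea formula, and then attack each level set by a boxing / covering argument. First I would apply the coarea formula \eqref{eq:coarea} to $u$ and each $u_i$: from $u_i\to u$ in $L^1(\Omega)$ together with $L^1$-lower semicontinuity of perimeter, for all but countably many $t\in\R$ we have $\chi_{\{u_i>t\}}\to \chi_{\{u>t\}}$ in $L^1(\Omega)$ and $P(\{u>t\},\Omega)\le \liminf_i P(\{u_i>t\},\Omega)$. Fatou's lemma plus the hypothesis $\|Du_i\|(\Omega)\to\|Du\|(\Omega)$ forces equality almost everywhere in $t$, and a diagonal extraction yields a subsequence (not relabelled) and a countable dense $T\subset\R$ such that for every $t\in T$,
\[
\chi_{\{u_i>t\}}\to \chi_{\{u>t\}}\text{ in }L^1(\Omega)\quad\text{and}\quad P(\{u_i>t\},\Omega)\to P(\{u>t\},\Omega).
\]

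Next I would reduce the pointwise assertion to a set-level claim. For any $x$ and any $t\in T$, the relations $t>u^\vee(x)\iff x\in O_{\{u>t\}}$ and $t<u^\wedge(x)\iff x\in I_{\{u>t\}}$ hold (and similarly with $u_i$). So it is enough to prove, for each fixed $t\in T$, that $\mathcal H$-almost every $x$ satisfies: $x\in O_{\{u>t\}}\Rightarrow x\in O_{\{u_i>t\}}$ eventually, and $x\in I_{\{u>t\}}\Rightarrow x\in I_{\{u_i>t\}}$ eventually. Taking $t\in T$ approaching $u^\vee(x)$ from above, respectively $u^\wedge(x)$ from below, and intersecting countably many full-measure sets, then yields the two inequalities of the theorem.

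The heart of the proof, and what I expect to be the main obstacle, is the set-level claim: given sets of finite perimeter $E_i$ with $\chi_{E_i}\to\chi_E$ strictly in $\BV$, show that $\mathcal H$-a.e. $x\in O_E$ lies in $O_{E_i}$ for all large $i$ (the interior statement is dual). For each bad $x\in O_E\setminus O_{E_i}$ the density of $E$ at $x$ tends to $0$, while on $\mathcal H$-a.e.\ such $x$ the density of $E_i$ is eventually at least $\gamma$ by \eqref{eq:definition of gamma} (or equal to $1$ if $x\in I_{E_i}$). A doubling-based interpolation in the scale $r$ then produces a "transition radius" $r_x^i$ at which the density of $E_i$ in $B(x,r_x^i)$ lies in some interval $[\gamma',1-\gamma']\subset(0,1)$, so the relative isoperimetric inequality \eqref{eq:relative isoperimetric inequality} gives
\[
\mu(B(x,r_x^i))/r_x^i\le C\,P(E_i,B(x,\lambda r_x^i)).
\]
A Vitali $5r$-covering picks out pairwise disjoint balls whose enlargements still cover the bad set, and summing (using bounded overlap of $\lambda$-enlargements) yields
\[
\mathcal H_{R_0}\bigl(\{x\in O_E:x\notin O_{E_i}\}\bigr)\le C\,P(E_i,U_i),
\]
for a neighbourhood $U_i$ of the bad set sitting inside (a slight enlargement of) $O_E$. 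The delicate point is showing $P(E_i,U_i)\to 0$: since $P(E,\cdot)$ is concentrated on $\partial^*E$ and therefore assigns no mass to $O_E$, strict $\BV$ convergence upgrades to weak-$^*$ convergence $P(E_i,\cdot)\to P(E,\cdot)$, and arranging the covering so that the $\lambda$-enlargements of the balls are effectively disjoint from $\partial^*E$ forces the localised perimeter of $E_i$ to vanish. Passing to a subsequence along which these content bounds are summable and invoking Lemma \ref{lem:Hausdorff content and measure} upgrades $\mathcal H_{R_0}$-null to $\mathcal H$-null and completes the argument.
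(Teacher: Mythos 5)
Your reduction from the pointwise statement to a set-level claim via the coarea formula, the use of $L^1$-lower semicontinuity plus $\|Du_i\|(\Omega)\to\|Du\|(\Omega)$ to force strict convergence of almost every superlevel set, and the boxing/covering scheme are all exactly the route the paper takes (the paper's Lemma~\ref{lem:convergence} and Proposition~\ref{prop:strict to pointwise convergence for sets}). Your variant of the transition-radius argument, applying the relative isoperimetric inequality to $E_i$ rather than to $E_i\Delta E$, would also work modulo the same ingredients.

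There is, however, a genuine gap exactly where you flag ``the delicate point'': the sentence ``arranging the covering so that the $\lambda$-enlargements of the balls are effectively disjoint from $\partial^*E$'' is not a mechanism. A center $x\in O_E$ can lie at zero \emph{metric} distance from $\partial^*E$, so no geometric arrangement of balls around such $x$ avoids $\partial^*E$. What actually makes the perimeter of the neighbourhood small is a measure-theoretic, not topological, separation, and it requires three pieces you omit: (i) exhaust $O_E$ by the \emph{closed} sets $O_j:=\{x:\sup_{0<r<1/j}\mu(B(x,r)\cap E)/\mu(B(x,r))\le\gamma/2\}$ (and likewise $I_j$ for $I_E$), whose closedness is itself a small argument; (ii) use \eqref{eq:definition of gamma} to get $\mathcal H(\partial^*E\cap O_j)=0$, hence $P(E,O_j)=0$ by \eqref{eq:def of theta}, and then use that $P(E,\cdot)$ is a Radon measure to find $s_j$ with $P(E,\overline{(O_j\cap\Omega'_j)^{s_j}})<2^{-j}$; and (iii) use upper semicontinuity of perimeter on \emph{closed} sets under $L^1$+strict convergence, $\limsup_i P(E_i,F)\le P(E,F)$, to transfer this smallness to $E_{i_j}$ \emph{after} $s_j$ has been fixed. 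The order of quantifiers is essential — you fix $j$, then $\Omega'_j$ and $s_j$, then choose $i_j$ so large that both the perimeter bound \eqref{eq:small perimeter in small neighborhood for Eijs} and the $L^1$ bound \eqref{eq:smallness of L1 norm and measures of balls} hold. Your write-up leaves $U_i$ varying with $i$ with no fixed scale, and invokes weak-$^*$ convergence of $P(E_i,\cdot)$ against an open set, which only gives a $\liminf$ lower bound, not the $\limsup$ upper bound you need; the closedness of $O_j$ (or $I_j$) and the choice of a closed reference neighbourhood are what rescue this. A secondary imprecision: your interpolation producing the transition radius needs an upper density bound for $E_i$ at a \emph{fixed} reference scale, and this too comes from \eqref{eq:smallness of L1 norm and measures of balls} together with the restriction to $O_j\cap\Omega'_j$, not from $x\in O_E$ alone. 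Without these pieces the boxing estimate does not close.
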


By the definitions of $u^{\wedge}$, $u^{\vee}$, and $\widetilde{u}$, we immediately get the following corollary.

\begin{corollary}\label{cor:strict to pointwise convergence}
Let $\Omega\subset X$ be an open set, and let $u_i,u\in \BV(\Omega)$ such that $u_i\to u$ in $L^1(\Omega)$ and $\Vert Du_i\Vert(\Omega)\to \Vert Du\Vert(\Omega)$. Then there exists a subsequence (not relabeled) such that
$\widetilde{u}_i(x)\to \widetilde{u}(x)$ for $\mathcal H$-almost every
$x\in \Omega\setminus S_u$.
\end{corollary}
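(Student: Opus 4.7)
The plan is to reduce the convergence of $u_i^{\wedge}, u_i^{\vee}$ at $\mathcal H$-a.e.\ point to a convergence of measure theoretic interiors and exteriors of super-level sets via the coarea formula, and to prove the latter by a boxing-type covering argument.

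First, after passing to a subsequence so that $u_i \to u$ $\mu$-a.e., I will combine the coarea formula \eqref{eq:coarea}, lower semicontinuity of perimeter, and Fatou's lemma to extract a further subsequence (not relabeled) such that for a.e.\ $t \in \R$, both $\chi_{E_{i,t}} \to \chi_{E_t}$ in $L^1(\Omega)$ and $P(E_{i,t}, \Omega) \to P(E_t, \Omega)$, where $E_{i,t} := \{u_i > t\}$ and $E_t := \{u > t\}$. The latter, together with open-set lower semicontinuity, gives weak convergence $P(E_{i,t}, \cdot) \rightharpoonup P(E_t, \cdot)$ as Radon measures on $\Omega$. Using the identities $u^{\vee}(x) = \inf\{t : x \in O_{E_t}\}$ and $u^{\wedge}(x) = \sup\{t : x \in I_{E_t}\}$, the inequalities of the theorem reduce to the set-theoretic claim: for each good $t$ in a countable dense subset of $\R$, for $\mathcal H$-a.e.\ $x \in \Omega$, one has $x \in O_{E_t} \Rightarrow x \in O_{E_{i,t}}$ eventually, and similarly for the interior. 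By replacing $u$ by $-u$ it suffices to handle the exterior statement.

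Fix a good $t$ and write $E_i = E_{i,t}$, $E = E_t$. I aim to show $\mathcal H(B) = 0$ for $B = \{x \in O_E : x \notin O_{E_i}\ \text{for infinitely many } i\}$. At $\mathcal H$-a.e.\ $x \in B$ and for each bad index $i$, the density estimate \eqref{eq:definition of gamma} provides an arbitrarily small scale $r$ at which $\mu(B(x, r) \cap E_i) \ge \gamma \mu(B(x, r))$. Because $\mu(E_i \triangle E) \to 0$ and $x \in O_E$, these scales must tend to zero uniformly as $i \to \infty$, and the density of $E_i$ at such scales is at most $\tfrac{1}{2}$ for $i$ large. The relative isoperimetric inequality \eqref{eq:relative isoperimetric inequality} then yields $\mu(B(x, r))/r \le (2 C_P/\gamma)\, P(E_i, B(x, \lambda r))$, and a standard $5r$-covering combined with the bounded overlap of $\lambda$-enlarged Vitali balls converts this pointwise bound into a Hausdorff content estimate for $B$.

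The main obstacle will be that the perimeter of the ``error set'' $E_i \triangle E$ need not tend to zero---the elementary example $X = \R$, $E = (0,1)$, $E_i = (0, 1 + 1/i)$ has $P(E_i \triangle E) = 2$ for all $i$---so a naive Borel--Cantelli argument over $i$ fails. My plan to circumvent this is to exploit the Caccioppoli algebra inequality \eqref{eq:Caccioppoli sets form an algebra}: combined with lower semicontinuity and strict convergence, it forces $P(E_i \cap E, \Omega) \to P(E, \Omega)$ and $P(E_i \cup E, \Omega) \to P(E, \Omega)$, so $E$ is sandwiched between families whose perimeters converge. Using this together with the weak convergence of the perimeter measures, I plan to localize the Vitali cover of $B$ to a shrinking neighborhood of $\partial^* E$ where the perimeter mass of $E_i$ is controlled, and thereby conclude that $\mathcal H_R(B) \to 0$ as $R \to 0$. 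Lemma~\ref{lem:Hausdorff content and measure} then upgrades this to $\mathcal H(B) = 0$. Intersecting over the countable collection of good $t$'s finishes the proof.
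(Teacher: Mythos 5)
Your reduction of Corollary 3.3 to Theorem 3.2, and then of the theorem (via the coarea formula and an $L^1(\R)$-convergence lemma) to a pointwise statement for the sets $E_{i,t}$, $E_t$ at a countable dense set of good levels $t$, matches the paper's route, and you correctly identify that the heart of the matter is a boxing/Vitali argument combined with upper semicontinuity of perimeter on compacta. However, the set-level part of your plan has concrete gaps in exactly the places where the paper does nontrivial work.

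First, your density estimate at a bad point is not quite right. You take $x\in B\subset O_E$ with $x\notin O_{E_i}$ and invoke \eqref{eq:definition of gamma} to find a small scale at which $\mu(B(x,r)\cap E_i)\ge\gamma\mu(B(x,r))$. But \eqref{eq:definition of gamma} only controls $\mathcal H$-a.e.\ point of $\partial^*E_i$; your $x$ may lie in $I_{E_i}$, where the density of $E_i$ tends to $1$, so the relative isoperimetric inequality applied to $E_i$ gives nothing (the minimum of the two densities is $\mu(B(x,r)\setminus E_i)/\mu(B(x,r))\to 0$). Your assertion that ``the density of $E_i$ at such scales is at most $\tfrac12$ for $i$ large'' is unjustified. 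The correct object to run the isoperimetric inequality on is $E_i\triangle E$: for $x\in O_E\cap(I_{E_i}\cup\partial^*E_i)$ its density is bounded away from $0$ at small scales and tends to $0$ at large scales, so a radius-doubling argument yields a scale $r$ with $\tfrac{1}{2C_d}<\mu(B(x,r)\cap(E_i\triangle E))/\mu(B(x,r))\le\tfrac12$; one then bounds $P(E_i\triangle E,B(x,\lambda r))$ by $2\bigl(P(E_i,B(x,\lambda r))+P(E,B(x,\lambda r))\bigr)$ via \eqref{eq:Caccioppoli sets form an algebra}. This is what makes the covering estimate close.

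Second, the localization step is the crux and your description of it is misdirected. You propose to ``localize the Vitali cover of $B$ to a shrinking neighborhood of $\partial^*E$,'' but $B\subset O_E$ is disjoint from $\partial^*E$, and balls around points of $O_E$ at small scales need not approach $\partial^*E$ at all; moreover, it is precisely where the balls do \emph{not} meet $\partial^*E$ that one has control. What is actually needed is an exhaustion of $O_E\cap\Omega$ (up to $\mathcal H$-nullsets) by \emph{closed} sets $O_j\cap\overline{\Omega'_j}$, where
\[
O_j:=\Bigl\{x:\ \sup_{0<r<1/j}\frac{\mu(B(x,r)\cap E)}{\mu(B(x,r))}\le\gamma/2\Bigr\}.
\]
Closedness of $O_j$ must be proved (by dominated convergence), and it matters: by \eqref{eq:definition of gamma} each $O_j$ misses $\mathcal H$-a.e.\ point of $\partial^*E$, so $P(E,O_j\cap\overline{\Omega'_j})=0$; then outer regularity of the Radon measure $P(E,\cdot)$ gives a metric neighborhood $(O_j\cap\Omega'_j)^{s_j}$ with $P\bigl(E,\overline{(O_j\cap\Omega'_j)^{s_j}}\bigr)<2^{-j}$, and upper semicontinuity of perimeter on closed sets (which follows from strict convergence, equivalently weak-$*$ convergence of the perimeter measures, as you note) lets you pick $i_j$ with $P\bigl(E_{i_j},(O_j\cap\Omega'_j)^{s_j}\bigr)<2^{-j}$ as well. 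The Vitali balls, chosen with radius $\lambda r<s_j/2$, then all sit inside $(O_j\cap\Omega'_j)^{s_j}$, and the sum of perimeters over disjoint balls is bounded by $2^{-j+1}$; this produces a summable bound on $\mathcal H_{5s_j}$ of the bad set and a Borel--Cantelli conclusion. Your sandwiching of $E$ between $E_i\cap E$ and $E_i\cup E$ is a true consequence of strict convergence, but it does not substitute for this closed-set exhaustion and does not by itself localize the perimeter of $E_i$.

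Third, and related, you attempt to estimate $\mathcal H(B)$ for the bad set of the \emph{original} sequence, but without a quantitatively chosen subsequence $i_j$ (tied to the decay rates $2^{-j}$ and the scales $s_j$) the Borel--Cantelli summation has nothing to sum. Passing to a subsequence is not merely a cosmetic step of the coarea reduction; it is essential in the set-level argument itself, and the counterexample at the start of Section~4 shows it cannot be avoided.
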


First we prove the theorem for sets of finite perimeter. In the proof below, the definition of the sets $I_j$ and $O_j$ is inspired by the proof of Federer's structure theorem, see \cite[Section 4.5.11]{Fed} or \cite[p. 222]{EvaG92}.

\begin{proposition}\label{prop:strict to pointwise convergence for sets}
Let $\Omega\subset X$ be an open set, and let $E_i,E\subset X$ be $\mu$-measurable sets with  $\ch_{E_i}\to \ch_E$ in $L^1_{\loc}(\Omega)$
and $P(E_i,\Omega)\to P(E,\Omega)<\infty$.
Passing to a subsequence (not relabeled), for $\mathcal H$-almost every
$x\in I_E\cap\Omega$ we have $x\in I_{E_i}$ for all sufficiently large $i\in\N$, and for $\mathcal H$-almost every $x\in O_E\cap\Omega$ we have $x\in O_{E_i}$ for all sufficiently large $i\in\N$.
\end{proposition}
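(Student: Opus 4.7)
The plan is to establish the statement for $I_E$; the $O_E$ case then follows by applying the $I_E$ argument to the complements $X \setminus E_i, X \setminus E$. I would first pass to a subsequence so that $\ch_{E_i} \to \ch_E$ pointwise $\mu$-a.e.\ and $\mu(K \cap (E \triangle E_i)) \le 2^{-i}$ on a fixed compact exhaustion of $\Omega$. With $\gamma \in (0, 1/2]$ denoting the constant from~\eqref{eq:definition of gamma}, I stratify $I_E \cap \Omega = \bigcup_{j\ge 1} I_j$ by setting
\[
I_j := \{x \in I_E \cap \Omega : \mu(B(x,r) \setminus E) < (\gamma/4)\,\mu(B(x,r))\text{ for all }0 < r \le 1/j\},
\]
which is the Federer-style decomposition alluded to in the statement. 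Fixing $j \in \N$ and a compact set $K \subset \Omega$, a diagonal-subsequence/Borel--Cantelli argument combined with Lemma~\ref{lem:Hausdorff content and measure} reduces the proposition to showing that, for each $\eps > 0$, the bad set $B_i := \{x \in I_j \cap K : x \notin I_{E_i}\}$ satisfies $\mathcal H_{R_i}(B_i) < \eps$ for all sufficiently large $i$, with $R_i \to 0$.

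For $x \in B_i$, outside an $\mathcal H$-null set coming from~\eqref{eq:definition of gamma} applied to $E_i$, one has $\mu(B(x,r) \setminus E_i) \ge (\gamma/2)\,\mu(B(x,r))$ at some small $r$, so combined with the defining bound of $I_j$ this forces $\mu(B(x,r) \cap (E \setminus E_i)) \ge (\gamma/4)\,\mu(B(x,r))$ at that scale. On the other hand, $\mu(E \setminus E_i) \le 2^{-i}$ makes the density of $E \setminus E_i$ in $B(x,r)$ drop below $\gamma/(8C_d)$ once $\mu(B(x,r)) > 8C_d \cdot 2^{-i}/\gamma$, and by~\eqref{eq:homogenous dimension} this threshold radius tends to zero uniformly on $K$ as $i \to \infty$. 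Picking $r^* = r^*_{x,i}$ to be the largest dyadic scale at which the density of $E \setminus E_i$ in $B(x,r^*)$ is $\ge \gamma/(8C_d)$, the doubling estimate $\mu(B(x,r^*))/\mu(B(x,2r^*)) \ge C_d^{-1}$ gives the density at $r^*$ as at most $C_d$ times the density at $2r^*$, hence $< \gamma/8$, so the density lies in $[\gamma/(8C_d), \gamma/8]$. The relative isoperimetric inequality~\eqref{eq:relative isoperimetric inequality} applied to $E \setminus E_i$, together with~\eqref{eq:Caccioppoli sets form an algebra} in the form $P(E \setminus E_i, \cdot) \le P(E, \cdot) + P(E_i, \cdot)$, then yields
\[
\mu(B(x,r^*))/r^* \le C\bigl(P(E, \lambda B(x,r^*)) + P(E_i, \lambda B(x,r^*))\bigr).
\]

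Applying a $5$-covering lemma to $\{B(x, r^*_{x,i}) : x \in B_i\}$ I extract disjoint balls $\{B(x_k, r^*_k)\}$ whose $5$-fold enlargements cover $B_i$, and bounded overlap of $\{\lambda B(x_k, r^*_k)\}$ in a doubling space allows summation of the previous bound to give
\[
\mathcal H_{5R_i}(B_i) \le C\bigl(P(E, W_i) + P(E_i, W_i)\bigr),
\]
where $W_i := \bigcup_k \lambda B(x_k, r^*_k)$ is open and $R_i := \sup_k r^*_k \to 0$ uniformly on $K$. Since $P(E, \cdot)$ is Radon with $P(E, I_j \cap K) = 0$ (as $I_j \cap \partial^* E = \emptyset$), outer regularity yields an open set $U \supset I_j \cap K$ with $P(E, \overline U) < \eps$ and $P(E, \partial U) = 0$. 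For $i$ large, $W_i \subset U$, and the standard argument using the strict convergence $P(E_i, \Omega) \to P(E, \Omega)$ together with lower semicontinuity on $U$ and on $\Omega \setminus \overline U$ gives $P(E_i, U) \to P(E, U) \le \eps$, so the right-hand side above is $< 3C\eps$ for $i$ large. Combining over a diagonal subsequence (with $\eps = 2^{-k}$, running through $j$ and the compact subsets) by Borel--Cantelli and Lemma~\ref{lem:Hausdorff content and measure} concludes the proof.

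I expect the main obstacle to be the careful construction of $r^*_{x,i}$. Pinning the density of $E \setminus E_i$ strictly between $0$ and $1$ at a single ball is essential, since otherwise the relative isoperimetric inequality yields no useful lower bound on the perimeter involved. The doubling monotonicity of densities under halving or doubling of the radius is the quantitative tool that makes such a pinning possible, and the $L^1$-control $\mu(E \setminus E_i) \le 2^{-i}$ is what forces the covering radii $r^*_{x,i}$ to vanish, so that the enlarged balls $\lambda B(x_k, r^*_k)$ remain inside the chosen neighborhood $U$ where $P(E, \cdot)$ and hence (by strict convergence) $P(E_i, \cdot)$ are both small.
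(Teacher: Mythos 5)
Your approach follows the paper's closely: a Federer-style stratification into sets $I_j$ with uniform density control, a boxing-inequality step pinning the density of $E\setminus E_i$ (the paper uses $E\,\Delta\,E_i$) at a well-chosen dyadic scale, a $5$-covering estimate on the Hausdorff content of the bad set, and strict convergence of perimeters to control $P(E_i,\cdot)$ near $I_j\cap K$. Lumping the cases $x\in\partial^*E_i$ and $x\in O_{E_i}$ together by invoking \eqref{eq:definition of gamma} for $E_i$, discarding a per-$i$ $\mathcal H$-null set, is a legitimate shortcut; the paper instead handles $I_j\cap\partial^*E_{i_j}$ separately via $\mathcal H(I_j\cap\partial^*E_{i_j}\cap\Omega'_j)\le\alpha^{-1}P(E_{i_j},I_j\cap\Omega'_j)$.

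The covering step, however, has a genuine gap. After extracting disjoint balls $\{B(x_k,r_k^*)\}$ via the $5$-covering lemma, you assert that the dilated family $\{\lambda B(x_k,r_k^*)\}$ has bounded overlap, which you need in order to deduce $\sum_k P(\cdot,\lambda B(x_k,r_k^*))\lesssim P(\cdot,W_i)$. Bounded overlap of $\lambda$-dilations of disjoint balls of widely varying radii fails for every $\lambda>1$, even in $\R$: choose $0<c<(\lambda-1)/(\lambda+1)$ and $A\in\bigl((1+c)/(1-c),\lambda\bigr)$; then the intervals $B_k:=\bigl((A-1)c^k,(A+1)c^k\bigr)$, $k\ge 1$, are pairwise disjoint while each $\lambda B_k=\bigl((A-\lambda)c^k,(A+\lambda)c^k\bigr)$ contains $0$, so every neighborhood of $0$ meets infinitely many $\lambda B_k$. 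The correct fix, and what the paper does, is to run the $5$-covering lemma on the already-dilated family $\{B(x,\lambda r^*_{x,i})\}$, producing pairwise \emph{disjoint} balls $\{B(x_m,\lambda r_m)\}$ whose $5$-enlargements cover the bad set; the perimeter sum bound then follows from disjointness. A secondary but also necessary repair: your $I_j$, defined with a strict inequality and intersected with $I_E\cap\Omega$, need not be closed, so the claim ``for $i$ large, $W_i\subset U$'' (which requires $\dist(\overline{I_j\cap K},X\setminus U)>0$) is not justified as stated. The paper avoids this by defining $I_j$ with a non-strict inequality on all of $X$, proving it is closed, and then using \eqref{eq:definition of gamma} to conclude $P(E,I_j\cap\Omega)=0$.
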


\begin{proof}
For each $j\in\N$, define (recall the number $\gamma$ from \eqref{eq:definition of gamma})
\[
I_j
:=\left\{ x\in X :\, \sup_{0<r<1/j}\frac{\mu(B(x,r)\setminus E)}{\mu(B(x,r))}\le\gamma/2\right\},
\]
\[
O_j
:=\left\{ x\in X :\, \sup_{0<r<1/j}\frac{\mu(B(x,r)\cap E)}{\mu(B(x,r))}\le\gamma/2\right\}.
\]
Note that these are increasing sequences of sets and
\begin{equation}\label{eq:IE and OE as unions}
I_E\subset \bigcup_{j=1}^{\infty} I_j\quad\text{and}\quad O_E\subset \bigcup_{j=1}^{\infty} O_j.
\end{equation}
Moreover, the sets $I_j$ and $O_j$ are closed, which can be seen as follows. For a fixed
$j\in\N$, take a sequence of points $x_k\in I_j$
with $x_k\to x\in X$. Let $0<r<1/j$. Then
by applying Lebesgue's dominated convergence theorem to both the numerator and
the denominator,
we obtain
\[
\frac{\mu(B(x,r)\setminus E)}{\mu(B(x,r))}=\lim_{k\to \infty}\frac{\mu(B(x_k,r-d(x_k,x)) \setminus E)}{\mu(B(x_k,r-d(x_k,x)))} \le \gamma/2,
\]
so $I_j$ is closed. The proof for the sets $O_j$ is analogous.
By \eqref{eq:definition of gamma} we also know that
$\mathcal H(\partial^*E\cap (I_j\cup O_j)\cap\Omega)=0$ for all $j\in\N$.
Then by \eqref{eq:def of theta} we have
\begin{equation}\label{eq:perimeter in Iks and Oks}
P\left(E,\bigcup_{j=1}^{\infty}(I_j\cup O_j)\cap\Omega\right)=0.
\end{equation}
We can find sets $\Omega'_j\Subset \Omega$ with $\Omega=\bigcup_{j=1}^{\infty}\Omega'_j$ such that
for a fixed $x_0\in X$, $\Omega'_j\subset B(x_0,j)$ for each $j\in\N$.
Fix $j\in\N$. By the lower semicontinuity of perimeter with respect to
$L^1$-convergence, we have for any open set $U\subset\Omega$
\[
P(E,U)\le \liminf_{i\to\infty} P(E_i,U).
\]
Since we also have $P(E_i,\Omega)\to P(E,\Omega)$, we get for any closed set $F\subset \Omega$
\[
\limsup_{i\to\infty} P(E_i,F)\le P(E,F).
\]
For any $A\subset X$ and $a>0$, denote
\[
A^{a}:=\{x\in X:\, \dist(x,A)<a\}.
\]
In particular, for any $0<a<\dist(\Omega_j',X\setminus \Omega)$ we have
\begin{equation}\label{eq:upper semicontinuity}
\limsup_{i\to\infty} P(E_i,(I_j\cap\Omega'_j)^a)\le\limsup_{i\to\infty}
P(E_i,\overline{(I_j\cap\Omega'_j)^a})\le P(E,\overline{(I_j\cap\Omega'_j)^a}).
\end{equation}
Using the fact that $P(E,\cdot)$ is a Radon measure on $\Omega$ in the second equality
below, and then the fact that $\overline{I_{j}\cap\Omega'_j}\subset I_j\cap\overline{\Omega'_j}$ since $I_j$ is closed,
\begin{align*}
\lim_{a\to 0}P(E,\overline{(I_j\cap\Omega'_j)^a})
&=\lim_{a\to 0}P(E,(I_j\cap\Omega'_j)^a)
=P\left(E,\bigcap_{a>0}(I_j\cap\Omega'_j)^a\right)\\
&=P(E,\overline{I_{j}\cap\Omega'_j})
\le P(E,I_j\cap\overline{\Omega'_j})=0
\end{align*}
by \eqref{eq:perimeter in Iks and Oks}.
Thus by choosing $0<s_j<\min\{1/5,\dist(\Omega'_j,X\setminus \Omega)\}$ small enough,
we get
\begin{equation}\label{eq:small perimeter in small neighborhood}
P(E,\overline{(I_j\cap\Omega'_j)^{s_j}})< 2^{-j}.
\end{equation}
Noting that $\Omega'_j\subset B(x_0,j)$, by \eqref{eq:homogenous dimension} we have 
\[
\inf_{y\in \Omega'}\mu(B(y,s_j/2\lambda))\ge \frac{1}{C_d^2}\left(\frac{s_j}{2\lambda j}\right)^Q\mu(B(x_0,j))>0.
\]
By \eqref{eq:upper semicontinuity} and \eqref{eq:small perimeter in small neighborhood},
 we find $i_j\in\N$ such that
\begin{equation}\label{eq:small perimeter in small neighborhood for Eijs}
P(E_{i_j},(I_j\cap\Omega'_j)^{s_j})< 2^{-j}
\end{equation} 
and (by the fact that $\ch_{E_i}\to\ch_E$ in $L^1_{\loc}(\Omega)$)
\begin{equation}\label{eq:smallness of L1 norm and measures of balls}
2C_d\Vert \ch_{E_{i_j}}-\ch_E\Vert_{L^1\left((\Omega_j')^{s_j}\right)}\le \inf_{y\in \Omega'}\mu(B(y,s_j/2\lambda)).
\end{equation}
Take $x\in I_{j}\cap O_{E_{i_j}}\cap \Omega'_j$. Since $E$ has lower density at least $1-\gamma/2$ at $x$ and $E_{i_j}$ has density zero, for some $0<s<s_j/2\lambda$ we have
in particular
\[
\vint{B(x,s)}|\ch_{E_{i_j}}-\ch_E|\,d\mu>\frac{1}{2C_d}.
\]
Next, double the radius $s$ repeatedly and take the last  number so obtained for which the above estimate holds, and call this number $r$. Note by \eqref{eq:smallness of L1 norm and measures of balls} that $r< s_j/2\lambda$. Then necessarily
\[
\frac{1}{2C_d}< \vint{B(x,r)}|\ch_{E_{i_j}}-\ch_E|\,d\mu\le \frac{1}{2},
\]
or equivalently
\[
\frac{1}{2C_d}< \frac{\mu(B(x,r)\cap (E_{i_j}\Delta E))}{\mu(B(x,r))}\le \frac{1}{2}.
\]
Then by the relative isoperimetric inequality \eqref{eq:relative isoperimetric inequality}
and \eqref{eq:Caccioppoli sets form an algebra}
\begin{equation}\label{eq:covering wrt perimeters of Eij and E}
\begin{split}
\frac{\mu(B(x,r))}{2C_d}
&\le \mu(B(x,r)\cap (E_{i_j}\Delta E))\\
&\le 2C_P r P(E_{i_j}\Delta E,B(x,\lambda r))\\
&\le 2C_P r \big(P(E_{i_j}\setminus E,B(x,\lambda r))+P(E\setminus E_{i_j},B(x,\lambda r))\big)\\
&\le 4 C_P r \big(P(E_{i_j},B(x,\lambda r))+P(E,B(x,\lambda r))\big).
\end{split}
\end{equation}
Using these radii, we obtain a covering
$\{B(x,\lambda r_x)\}_{x\in I_{j}\cap O_{E_{i_j}}\cap \Omega'_j}$
with $r_x< s_j/2\lambda$.
By the 5-covering theorem, we can extract a countable collection of pairwise disjoint balls
$\{B(x_m,\lambda r_m)\}_{m\in\N}$ with $r_m< s_j/\lambda$ such that the balls $\{B(x_m,5 \lambda r_m)\}_{m\in\N}$ cover $I_{j}\cap O_{E_{i_j}}\cap \Omega'_j$.
Denote by $\lceil a\rceil$ the smallest integer at least $a\in\R$.
Then
\begin{align*}
&\mathcal H_{5s_j}(I_{j}\cap O_{E_{i_j}}\cap \Omega'_j)
\le \sum_{m=1}^{\infty}\frac{\mu(B(x_m,5\lambda r_m))}{5\lambda r_m}\\
&\qquad\le C_d^{\lceil \log_2(5\lambda)\rceil}\sum_{m=1}^{\infty}\frac{\mu(B(x_m,r_m))}{r_m}\\
&\qquad\overset{\eqref{eq:covering wrt perimeters of Eij and E}}{\le} 8 C_d^{\lceil \log_2(5\lambda)\rceil+1} C_P \sum_{m=1}^{\infty} (P(E_{i_j},B(x_m,\lambda r_m))+P(E,B(x_m,\lambda r_m)))\\
&\qquad\le 8 C_d^{\lceil \log_2(5\lambda)\rceil+1} C_P (P(E_{i_j},(I_j\cap \Omega'_j)^{s_j})+P(E,(I_j\cap \Omega'_j)^{s_j}))\\
&\qquad \le 8 C_d^{\lceil \log_2(5\lambda)\rceil+1} C_P \times 2^{-j+1}
\end{align*}
by \eqref{eq:small perimeter in small neighborhood} and \eqref{eq:small perimeter in small neighborhood for Eijs}.
In total, using also \eqref{eq:def of theta},
\begin{align*}
&\mathcal H_{5s_j}(I_j\cap (\partial^* E_{i_j}\cup O_{E_{i_j}})\cap \Omega'_j)\\
&\qquad\quad \le \mathcal H_{5s_j}(I_j\cap \partial^* E_{i_j}\cap \Omega'_j)+\mathcal H_{5s_j}(I_j\cap O_{E_{i_j}}\cap \Omega'_j)\\
&\qquad\quad \le \alpha^{-1}P(E_{i_j}, I_{j}\cap\Omega'_j)+ 8 C_d^{\lceil \log_2(5\lambda)\rceil+1} C_P \times 2^{-j+1}\\
&\qquad\quad \le 2^{-j}\alpha^{-1} + 2^{-j+4} C_d^{\lceil \log_2(5\lambda)\rceil+1} C_P
\end{align*}
by \eqref{eq:small perimeter in small neighborhood for Eijs}.
Then for any $p\in\N$, since $I_p\subset I_j$ and $\Omega_p'\subset \Omega_j'$ as soon as $j\ge p$,
\begin{equation}\label{eq:main estimate for bad set}
\begin{split}
&\mathcal H_1\left(I_p\cap \Omega'_p\cap \bigcap_{l=1}^{\infty}\bigcup_{j=l}^{\infty}(\partial^* E_{i_j}\cup O_{E_{i_j}})\right)\\
&\qquad\qquad\le \lim_{l\to \infty} \mathcal H_1\left(I_p\cap \Omega'_p\cap \bigcup_{j=l}^{\infty}(\partial^* E_{i_j}\cup O_{E_{i_j}})\right)\\
&\qquad\qquad\le \lim_{l\to \infty}
\sum_{j=l}^{\infty}\mathcal H_{1}(I_p\cap\Omega'_p\cap (\partial^* E_{i_j}\cup O_{E_{i_j}}))\\
&\qquad\qquad\le \lim_{l\to \infty}
\sum_{j=l}^{\infty}\mathcal H_{5s_j}(I_j\cap\Omega'_j\cap (\partial^* E_{i_j}\cup O_{E_{i_j}}))\\
&\qquad\qquad\le\lim_{l\to \infty}( 2^{-l+1}\alpha^{-1} + 2^{-l+5} C_d^{\lceil \log_2(5\lambda)\rceil+1} C_P)\\
&\qquad\qquad =0.
\end{split}
\end{equation}
Thus by \eqref{eq:IE and OE as unions}
\begin{align*}
&\mathcal H_1\left(I_E\cap \Omega\cap\bigcap_{l=1}^{\infty}\bigcup_{j=l}^{\infty}(\partial^* E_{i_j}\cup O_{E_{i_j}})\right)\\
&\qquad\quad \le \sum_{p=1}^{\infty}\mathcal H_1\left(I_p\cap \Omega'_p\cap\bigcap_{l=1}^{\infty}\bigcup_{j=l}^{\infty}(\partial^* E_{i_j}\cup O_{E_{i_j}})\right)=0,
\end{align*}
and so by Lemma \ref{lem:Hausdorff content and measure},
\[
\mathcal H\left(I_E\cap \Omega\cap\bigcap_{l=1}^{\infty}\bigcup_{j=l}^{\infty}(\partial^* E_{i_j}\cup O_{E_{i_j}})\right)=0.
\]
To conclude the proof, we note that for any $x\in I_E\cap \Omega\setminus \bigcap_{l=1}^{\infty}\bigcup_{j=l}^{\infty}(O_{E_{i_j}}\cup \partial^* E_{i_j})$ we have $x\in I_{E_{i_j}}$ starting from some index $j\in\N$. By an analogous argument, and by passing to a further subsequence (not relabeled),
we obtain that for $\mathcal H$-almost every $x\in O_E\cap \Omega$ we have $x\in O_{E_{i_j}}$ starting from some index $j\in\N$.
\end{proof}

We note the following relation between the $1$-capacity and the Hausdorff content: for any $A\subset X$, we have
\begin{equation}\label{eq:capacity and Hausdorff content}
\capa_1(A)\le 2C_d\mathcal H_1(A),
\end{equation}
see the proof of \cite[Lemma 3.4]{KKST2}.

Besides pointwise convergence, we wish to consider uniform convergence outside sets
of small $1$-capacity. For this, we need the following lemma, whose proof is again based on a boxing inequality -type argument.

\begin{lemma}\label{lem:uniform measure theoretic interior}
Let $\Omega\subset X$ be an open set, let $E\subset X$ be a $\mu$-measurable set with
$P(E,\Omega)<\infty$, let $K\subset I_E\cap \Omega$ be a compact set, and let $\eps>0$.
Then there exists an open set $V\subset X$ with $\capa_1(V)<\eps$ such that
\[
\frac{\mu(B(x,r)\setminus E)}{\mu(B(x,r))}\to 0\quad\textrm{as }r\to 0
\]
uniformly for $x\in K\setminus V$.
\end{lemma}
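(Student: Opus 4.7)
The plan is to adapt the boxing-inequality argument used in the proof of Proposition \ref{prop:strict to pointwise convergence for sets} to a single set $E$. For each integer $k\ge 2$ and $j\in\N$, I would introduce the ``bad'' set
\[
G_{k,j}:=\left\{x\in K:\ \sup_{0<r<1/j}\frac{\mu(B(x,r)\setminus E)}{\mu(B(x,r))}>\frac{1}{k}\right\}.
\]
The target estimate is $\capa_1(G_{k,j})\to 0$ as $j\to\infty$. Given this, for each $k\ge 2$ I would pick $j_k$ with $\capa_1(G_{k,j_k})<\eps 2^{-k-1}$; then, using outer regularity of $\capa_1$, I enlarge $G_{k,j_k}$ to an open set $V_k$ with $\capa_1(V_k)<\eps 2^{-k}$. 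The union $V:=\bigcup_{k\ge 2}V_k$ is open with $\capa_1(V)<\eps$ by countable subadditivity, and for $x\in K\setminus V$ and every $k\ge 2$ we have $\mu(B(x,r)\setminus E)/\mu(B(x,r))\le 1/k$ for all $0<r<1/j_k$, which is precisely the claimed uniform decay.

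The heart of the argument is the boxing step. For each $x\in G_{k,j}$ I fix some $r_x<1/j$ on which the ratio exceeds $1/k$ and halve repeatedly: because $x\in K\subset I_E$ forces the ratio to vanish as $r\to 0$, there is a last radius $\rho_x=r_x/2^m$ at which the ratio still exceeds $1/k$, while at $\rho_x/2$ the ratio is at most $1/k$. I then apply the relative isoperimetric inequality \eqref{eq:relative isoperimetric inequality} at $B(x,\rho_x)$, splitting into two cases. If the density of $E^c$ at $\rho_x$ is $\le 1/2$, the minimum is $\mu(B(x,\rho_x)\setminus E)$ and directly
\[
\frac{1}{k}\mu(B(x,\rho_x))\le 2C_P\rho_x\,P(E,B(x,\lambda\rho_x)).
\]
If that density exceeds $1/2$, the minimum is $\mu(B(x,\rho_x)\cap E)$, which I bound below via
\[
\mu(B(x,\rho_x)\cap E)\ge\mu(B(x,\rho_x/2)\cap E)\ge\bigl(1-\tfrac{1}{k}\bigr)\mu(B(x,\rho_x/2))\ge\frac{1-1/k}{C_d}\mu(B(x,\rho_x)),
\]
so again $\mu(B(x,\rho_x))/\rho_x\le C(k,C_d,C_P)\,P(E,B(x,\lambda\rho_x))$.

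To finish, I would apply the $5$-covering theorem to $\{B(x,\lambda\rho_x)\}_{x\in G_{k,j}}$ to extract a disjoint subfamily $\{B(x_m,\lambda\rho_m)\}$ whose $5$-dilates still cover $G_{k,j}$, with $\lambda\rho_m\le\lambda/j$. Summing the boxing estimate and invoking doubling yields
\[
\mathcal H_{5\lambda/j}(G_{k,j})\le C(k,\lambda,C_d,C_P)\sum_m P(E,B(x_m,\lambda\rho_m))\le C(k,\lambda,C_d,C_P)\,P(E,K^{\lambda/j}),
\]
using that the balls are pairwise disjoint and contained in $K^{\lambda/j}\subset\Omega$ (which holds once $\lambda/j<\dist(K,X\setminus\Omega)$). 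Since $K$ is compact and $K\cap\partial^*E=\emptyset$, formula \eqref{eq:def of theta} gives $P(E,K)=0$; finiteness of $P(E,\cdot)$ as a Radon measure then forces $P(E,K^{\lambda/j})\downarrow 0$. Combining with \eqref{eq:capacity and Hausdorff content} and Lemma \ref{lem:Hausdorff content and measure} yields $\capa_1(G_{k,j})\to 0$, closing the argument. The main obstacle I anticipate is exactly the halving bifurcation: the argument only works because membership in $I_E$ supplies the lower density bound at $\rho_x/2$ that rescues the isoperimetric inequality in the high-density regime where a direct application fails.
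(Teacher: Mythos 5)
Your proof is correct and follows essentially the same boxing-inequality strategy as the paper: introduce bad sets where the density ratio exceeds a threshold, halve radii using $x\in I_E$ to land on a scale where the relative isoperimetric inequality applies, use the $5$-covering theorem and the fact that $P(E,K)=0$ (hence $P(E,K^a)\downarrow 0$), and pass from Hausdorff content to capacity via \eqref{eq:capacity and Hausdorff content} and outer regularity. The only differences are organizational — you use a two-parameter family $G_{k,j}$ and a case split where the paper couples the threshold with the radius cutoff in a single index $j$ and bounds $\min\{\mu(B\cap E),\mu(B\setminus E)\}$ uniformly — and your citation of Lemma \ref{lem:Hausdorff content and measure} is superfluous here (the content-to-capacity bound suffices; that lemma concerns $\mathcal H$-null sets, not capacity).
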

\begin{proof}
Note by \eqref{eq:def of theta} that $P(E,K)=0$.
Recall the notation $K^{a}:=\{x\in X:\, \dist(x,K)<a\}$.
For each $j=2,3,\ldots$, by the fact that $P(E,\cdot)$ is a Radon measure on $\Omega$,
we can choose
\[
0<s_j<\min\{1/5,\dist(K,X\setminus \Omega)\}
\]
such that
\begin{equation}\label{eq:smallness of neighborhoods of K}
P(E,K^{s_j})<2^{-j}.
\end{equation}
Let
\[
A_j:=\left\{x\in K:\, \frac{\mu(B(x,s)\setminus E)}{\mu(B(x,s))}>\frac{1}{j}\ \ \textrm{for some }0<s<s_j/\lambda\right\}.
\]
Fix $j\ge 2$. 
Take $x\in A_j$ and $0<s<s_j/\lambda$ such that
\[
\frac{\mu(B(x,s)\setminus E)}{\mu(B(x,s))}>\frac{1}{j}.
\]
Halve the radius $s$ repeatedly and take the last number so obtained for which the above estimate holds, and call this number $r$; we find such a number by the fact that $x\in I_E$.
Then we have
\[
\frac{\mu(B(x,r)\cap E)}{\mu(B(x,r))}\ge \frac{1}{C_d}\frac{\mu(B(x,r/2)\cap E)}{\mu(B(x,r/2))}\ge \frac{1-1/j}{C_d}\ge \frac{1}{2C_d}.
\]
Thus
\begin{align*}
\min\{\mu(B(x,r)\cap E),\mu(B(x,r)\setminus E)\}
&\ge\min\left\{\frac 1j, \frac{1}{2C_d}\right\}\mu(B(x,r))\\
&\ge \frac{\mu(B(x,r))}{2C_d j}.
\end{align*}
Using these radii, we get a covering $\{B(x,\lambda r_x)\}_{x\in A_j}$ with $r_x\le s_j/\lambda$. By the $5$-covering theorem, we can extract a countable collection of disjoint balls $\{B(x_k,\lambda r_k)\}_{k=1}^{\infty}$ such that
the balls $B(x_k,5\lambda r_k)$ cover $A_j$.
Thus
\begin{align*}
\mathcal H_1(A_j)
&\le \sum_{k=1}^{\infty}\frac{\mu(B(x_k,5\lambda r_k))}{5\lambda r_k}\\
&\le C_d^{\lceil 5\lambda \rceil}\sum_{k=1}^{\infty}\frac{\mu(B(x_k,r_k))}{r_k}\\
&\le 2C_d^{\lceil 5\lambda \rceil+1}j\sum_{k=1}^{\infty}\frac{\min\{\mu(B(x,r_k)\cap E),\mu(B(x,r_k)\setminus E)\}}{r_k}\\
&\overset{\eqref{eq:relative isoperimetric inequality}}{\le} 4C_d^{\lceil 5\lambda \rceil+1} C_P j\sum_{k=1}^{\infty}P(E,B(x_k,\lambda r_k))\\
&\le 4C_d^{\lceil 5\lambda \rceil+1} C_P jP(E,K^{s_j})\\
&\le 4C_d^{\lceil 5\lambda \rceil+1} C_P j 2^{-j}
\end{align*}
by \eqref{eq:smallness of neighborhoods of K}.
Then
\[
\mathcal H_1\left(\bigcup_{j=l}^{\infty} A_j\right)\le \sum_{j=l}^{\infty}
\mathcal H_1( A_j)\le 4\sum_{j=l}^{\infty} C_d^{\lceil 5\lambda \rceil+1} C_P j 2^{-j}< \frac{\eps}{2C_d}
\]
for sufficiently large $l\in\N$, and so by \eqref{eq:capacity and Hausdorff content}
\[
\capa_1\left(\bigcup_{j=l}^{\infty} A_j\right)<\eps.
\]
By the fact that $\capa_1$ is an outer capacity, we can choose an open set $V\supset \bigcup_{j=l}^{\infty} A_j$ with $\capa_1(V)<\eps$.
By the definition of the sets $A_j$, we obtain the desired uniform convergence in
$K\setminus V$.
\end{proof}

Now we have the following uniform convergence result for sets of finite perimeter.

\begin{proposition}\label{prop:uniform convergence for sets of finite perimeter}
Let $\Omega\subset X$ be an open set, and let $E_i,E\subset X$ be $\mu$-measurable sets with  $\ch_{E_i}\to \ch_E$ in $L^1_{\loc}(\Omega)$
and $P(E_i,\Omega)\to P(E,\Omega)<\infty$.
Then there exists a subsequence (not relabeled) such that
whenever $K\subset I_E\cap\Omega$ is a compact set and $\eps>0$,
there exists an open set $U\subset X$ with $\capa_1(U)<\eps$ and an index $l\in\N$ such that 
$K\setminus U\subset I_{E_i}$ for all $i\ge l$.
\end{proposition}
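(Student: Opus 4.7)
My plan is to strengthen the pointwise result of Proposition~\ref{prop:strict to pointwise convergence for sets} to uniform convergence on compact subsets of $I_E\cap\Omega$, away from a set of arbitrarily small $1$-capacity. Two ingredients combine: the quantitative estimate buried inside the proof of Proposition~\ref{prop:strict to pointwise convergence for sets}, and the uniform density Lemma~\ref{lem:uniform measure theoretic interior} applied to the limit set $E$.

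I take as my subsequence the sequence $(E_{i_j})$ constructed in the proof of Proposition~\ref{prop:strict to pointwise convergence for sets}. The intermediate step of \eqref{eq:main estimate for bad set}, before passing to the limit in $l$, gives the explicit bound
\[
\mathcal H_1\!\left(I_p\cap \Omega'_p\cap \bigcup_{j=l}^{\infty}(\partial^* E_{i_j}\cup O_{E_{i_j}})\right)\le 2^{-l+1}\alpha^{-1}+2^{-l+5}C_d^{\lceil\log_2(5\lambda)\rceil+1}C_P
\]
for every $p\in\N$ and every $l\ge p$. Fix now a compact set $K\subset I_E\cap\Omega$ and $\eps>0$. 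By Lemma~\ref{lem:uniform measure theoretic interior} applied to $E$, there is an open set $V_1$ with $\capa_1(V_1)<\eps/2$ such that $\mu(B(x,r)\setminus E)/\mu(B(x,r))\to 0$ uniformly for $x\in K\setminus V_1$. Consequently there exists $r_0>0$ such that this density is at most $\gamma/2$ for every $x\in K\setminus V_1$ and every $0<r<r_0$; choosing $p\in\N$ with $1/p\le r_0$ and $K\subset\Omega'_p$ (possible by compactness), this gives the crucial inclusion $K\setminus V_1\subset I_p\cap\Omega'_p$.

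I then pick $l\ge p$ so large that the right-hand side of the displayed inequality is less than $\eps/(4C_d)$. By \eqref{eq:capacity and Hausdorff content}, the $1$-capacity of the set in question is less than $\eps/2$, and the outer-capacity property of $\capa_1$ lets me enclose $I_p\cap\Omega'_p\cap\bigcup_{j=l}^{\infty}(\partial^* E_{i_j}\cup O_{E_{i_j}})$ in an open set $V_2$ with $\capa_1(V_2)<\eps/2$. Setting $U:=V_1\cup V_2$, I have $\capa_1(U)<\eps$, and for $x\in K\setminus U$ I have $x\in I_p\cap\Omega'_p$ and $x\notin\partial^* E_{i_j}\cup O_{E_{i_j}}$ for every $j\ge l$, which by the partition $X=I_{E_{i_j}}\cup\partial^* E_{i_j}\cup O_{E_{i_j}}$ forces $x\in I_{E_{i_j}}$ for all $j\ge l$, as desired.

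The main obstacle is this reduction step: the abstract inclusion $K\subset I_E$ does not by itself yield $K\subset I_p$ for any single $p$, which is needed in order to apply the quantitative estimate extracted from the earlier proof. Lemma~\ref{lem:uniform measure theoretic interior} is precisely the tool that converts pointwise density control into uniform density control, at the cost of discarding a set of arbitrarily small $1$-capacity, and once this reduction is available the rest of the argument is essentially bookkeeping with the bound already established in the proof of Proposition~\ref{prop:strict to pointwise convergence for sets}.
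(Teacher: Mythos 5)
Your proposal is correct and follows essentially the same path as the paper's own proof: take the subsequence from Proposition~\ref{prop:strict to pointwise convergence for sets}, invoke Lemma~\ref{lem:uniform measure theoretic interior} to force $K$ (minus a small-capacity open set) into some fixed $I_p\cap\Omega_p'$, and then reuse the finite-$l$ bound implicit in~\eqref{eq:main estimate for bad set} together with~\eqref{eq:capacity and Hausdorff content} and the outer-capacity property. The only difference from the paper is cosmetic bookkeeping — you set $U=V_1\cup V_2$ and rely on subadditivity of $\capa_1$, whereas the paper first combines the two capacity bounds into $\capa_1\bigl(K\cap\bigcup_{j\ge l}(\partial^*E_{i_j}\cup O_{E_{i_j}})\bigr)<\eps$ and then takes a single open neighborhood — but both are valid and yield the same conclusion.
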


\begin{proof}
Take the subsequence $\{i_j\}_{j=1}^{\infty}$ obtained in the proof of Proposition \ref{prop:strict to pointwise convergence for sets}.
Fix a compact set $K\subset I_E\cap \Omega$ and $\eps>0$.
By Lemma \ref{lem:uniform measure theoretic interior}
we find a set $V\subset X$ with $\capa_1(V)<\eps/2$ such that 
\[
\frac{\mu(B(x,r)\setminus E)}{\mu(B(x,r))}\to 0\quad\textrm{as }r\to 0
\]
uniformly for $x\in K\setminus V$. This implies in particular that for some $p\in \N$,
\[
\frac{\mu(B(x,r)\setminus E)}{\mu(B(x,r))}\le \frac{\gamma}{2}
\]
for all $x\in K\setminus V$ and $0<r<1/p$. Hence $K\setminus V\subset I_p$;
recall the definition from the beginning of the proof of Proposition \ref{prop:strict to pointwise convergence for sets}. Thus $\capa_1(K\setminus I_p)\le \capa_1(V)<\eps/2$.

Moreover, by choosing $p$ even larger, if necessary, we have $K\subset \Omega_p'$, where the sets $\Omega_k'\Subset\Omega$ were also defined in the proof of Proposition \ref{prop:strict to pointwise convergence for sets}. Now
\begin{align*}
\mathcal H_1\left(I_{p}\cap \Omega'_p\cap\bigcup_{j=l}^{\infty}(\partial^* E_{i_j}\cup O_{E_{i_j}})\right)
&\le \sum_{j=l}^{\infty}\mathcal H_1\left(I_{p}\cap \Omega'_p\cap(\partial^* E_{i_j}\cup O_{E_{i_j}})\right)\\
&<\frac{\eps}{4C_d}
\end{align*}
for a sufficiently large $l\in\N$, by the last four lines of \eqref{eq:main estimate for bad set}.
Thus by \eqref{eq:capacity and Hausdorff content},
\[
\capa_1\left(I_{p}\cap \Omega'_p\cap\bigcup_{j=l}^{\infty}(\partial^* E_{i_j}\cup O_{E_{i_j}})\right)<\frac{\eps}{2}.
\]
Then since $\capa_1(K\setminus (I_{p}\cap\Omega_p'))<\eps/2$, we conclude
\[
\capa_1\left(K\cap \bigcup_{j=l}^{\infty}(\partial^* E_{i_j}\cup O_{E_{i_j}})\right)<\eps.
\]
Since $\capa_1$ is an outer capacity, we can take an open set
\[
U\supset K\cap \bigcup_{j=l}^{\infty}(\partial^* E_{i_j}\cup O_{E_{i_j}})
\]
with $\capa_1(U)<\eps$. Now $K\setminus U\subset I_{E_{i_j}}$ for all $j\ge l$.
\end{proof}

The following lemma is essentially \cite[Exercise 1.19]{AFP}.

\begin{lemma}\label{lem:convergence}
Let $f_i,f\in L^1(\R)$ be nonnegative functions and assume that $f(t)\le \liminf_{i\to\infty}f_i(t)$ for almost every $t\in\R$, and that
\begin{equation}\label{eq:limsup estimate}
\limsup_{i\to\infty}\int_{\R}f_i\,dt\le \int_{\R}f\,dt.
\end{equation}
Then $f_i\to f$ in $L^1(\R)$.
\end{lemma}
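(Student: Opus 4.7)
The plan is to reduce the $L^1$ convergence to convergence of two one-sided pieces: the negative part $(f_i-f)^-$, which is controlled directly by the pointwise lower bound together with dominated convergence, and the positive part $(f_i-f)^+$, which is handled by subtracting the negative part from the known limit of $\int(f_i-f)\,dt$.

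First I would combine Fatou's lemma with the hypothesis \eqref{eq:limsup estimate}: since $f\le \liminf_{i\to\infty}f_i$ almost everywhere, Fatou gives
\[
\int_{\R} f\,dt \le \int_{\R}\liminf_{i\to\infty}f_i\,dt\le \liminf_{i\to\infty}\int_{\R} f_i\,dt,
\]
and combining with \eqref{eq:limsup estimate} yields $\int_{\R}f_i\,dt\to \int_{\R}f\,dt$.

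Next I would analyze the negative part $(f_i-f)^-=(f-f_i)^+$. Because $f_i\ge 0$, we have the pointwise bound $(f-f_i)^+\le f\in L^1(\R)$. Moreover, the hypothesis $f(t)\le \liminf_{i\to\infty}f_i(t)$ means $\limsup_{i\to\infty}(f(t)-f_i(t))\le 0$ for almost every $t$, so $(f-f_i)^+\to 0$ pointwise almost everywhere. Dominated convergence therefore gives $\int_{\R}(f-f_i)^+\,dt\to 0$.

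Finally I would use the identity $f_i-f=(f_i-f)^+-(f-f_i)^+$ and the first step:
\[
\int_{\R}(f_i-f)^+\,dt = \int_{\R}(f_i-f)\,dt+\int_{\R}(f-f_i)^+\,dt\longrightarrow 0+0=0.
\]
Adding the two pieces, $\int_{\R}|f_i-f|\,dt=\int_{\R}(f_i-f)^+\,dt+\int_{\R}(f-f_i)^+\,dt\to 0$, which is the desired conclusion. There is no real obstacle here; the only subtlety is making sure the hypothesis is only used in the one-sided form $\limsup(f-f_i)\le 0$ to apply dominated convergence to $(f-f_i)^+$, after which the positive part takes care of itself via the convergence of integrals.
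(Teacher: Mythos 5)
Your proof is correct. It follows the same basic decomposition as the paper (split $f_i - f$ into positive and negative parts, kill the negative part by dominated convergence, then transfer the integral estimate to the positive part), but you streamline the handling of the negative part. The paper constructs the auxiliary sequence $g_i := \inf_{j\ge i} f_j$, applies monotone convergence to show $g_i \nearrow g \in L^1$ with $g=f$ a.e., and only then invokes dominated convergence to get $(f-g_i)_+ \to 0$ in $L^1$, using $(f-f_i)_+ \le (f-g_i)_+$. You skip the auxiliary sequence entirely by observing that $f_i\ge 0$ already gives the pointwise bound $(f-f_i)^+ \le f \in L^1$, so dominated convergence applies directly to $(f-f_i)^+$. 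This removes one layer of the argument at no cost; it is a cleaner route to the same conclusion. The other small difference is that you first upgrade the hypothesis to $\int f_i \to \int f$ via Fatou, whereas the paper works throughout with the one-sided bound $\limsup\int(f_i-f)\le 0$; either is fine and they amount to the same thing.
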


\begin{proof}
Define $g_i:=\inf_{j\ge i}f_j$. Then for some function $g$, $g_i(t)\nearrow g(t)\ge f(t)$ for almost every $t\in\R$.
By Lebesgue's monotone convergence theorem,
\[
\int_{\R} g\,dt=\lim_{i\to\infty}\int_{\R} g_i\,dt\le \limsup_{i\to\infty}\int_{\R}f_i\,dt\le \int_{\R}f\,dt<\infty,
\]
that is, $g\in L^1(\R)$. Thus by Lebesgue's dominated convergence theorem,
$g_i\to g$ in $L^1(\R)$. But since $\lim_{i\to\infty}\int_{\R} g_i\,dt\le \int_{\R}f\,dt$, we must have $g=f$ almost everywhere.

Thus $(f_i-f)_-=(f-f_i)_+\le (f-g_i)_+\to 0$ in $L^1(X)$, and so
\begin{align*}
\limsup_{i\to\infty}\int_{\R}(f_i-f)_+\,dt
&= \limsup_{i\to\infty} \left(\int_{\R}(f_i-f)\,dt+\int_{\R}(f_i-f)_-\,dt\right)\\
&= \limsup_{i\to\infty}\int_{\R}(f_i-f)\,dt\le 0
\end{align*}
by \eqref{eq:limsup estimate}.
\end{proof}

\begin{proof}[Proof of Theorem \ref{thm:strict to pointwise convergence}]
By passing to a subsequence (not relabeled), for almost every $t\in\R$ we have $\ch_{\{u_i>t\}}\to \ch_{\{u>t\}}$ in $L^1(\Omega)$, see e.g. \cite[p. 188]{EvaG92}. Hence by lower semicontinuity, for almost every $t\in\R$
\[
P(\{u>t\},\Omega)\le \liminf_{i\to\infty} P(\{u_i>t\},\Omega).
\]
By the coarea formula \eqref{eq:coarea} and the assumption of the theorem, we have also
\[
\lim_{i\to\infty}\int_{-\infty}^{\infty} P(\{u_i>t\},\Omega)\,dt
=\int_{-\infty}^{\infty} P(\{u>t\},\Omega)\,dt.
\]
By Lemma \ref{lem:convergence} we conclude that $P(\{u_i>\cdot\},\Omega)\to P(\{u>\cdot\},\Omega)$ in $L^1(\R)$.
By passing to a subsequence (not relabeled), we have $P(\{u_i>t\},\Omega)\to P(\{u>t\},\Omega)<\infty$ for almost every $t\in\R$, and then in particular we can find a countable and dense set  $T\subset \R$ such that
\[
\ch_{\{u_i>t\}}\to \ch_{\{u>t\}}\ \ \textrm{in }L^1(\Omega)\quad\textrm{and}\quad P(\{u_i>t\},\Omega)\to P(\{u>t\},\Omega)<\infty
\]
for every $t\in T$.
If $t\in T$, by Proposition \ref{prop:strict to pointwise convergence for sets}, we can find a $\mathcal H$-negligible set $\widetilde{N}\subset X$ and a subsequence (not relabeled) such that if $x\in I_{\{u>t\}}\cap\Omega\setminus \widetilde{N}$, then $x\in I_{\{u_i>t\}}$ for sufficiently large $i\in\N$, and if $x\in O_{\{u>t\}}\cap\Omega\setminus \widetilde{N}$, then $x\in O_{\{u_i>t\}}$  for sufficiently large $i\in\N$.

By a diagonal argument, we find a $\mathcal H$-negligible set $N\subset X$ and a subsequence (not relabeled) such that if $t\in T$ and $x\in I_{\{u>t\}}\cap\Omega\setminus N$, then $x\in I_{\{u_i>t\}}$ for sufficiently large $i\in\N$, and if $x\in O_{\{u>t\}}\cap\Omega\setminus N$, then $x\in O_{\{u_i>t\}}$  for sufficiently large $i\in\N$.

By \cite[Lemma 3.2]{KKST3}, there exists a $\mathcal H$-negligible set $\widehat{N}\subset X$ such that for every $x\in \Omega\setminus \widehat{N}$, we have $-\infty<u^{\wedge}(x)\le u^{\vee}(x)<\infty$.
Fix $x\in \Omega\setminus ( N\cup \widehat{N})$. Also fix $\eps>0$. There exists
$t\in (u^{\wedge}(x)-\eps,u^{\wedge}(x))\cap T$. Now
\[
\lim_{r\to 0}\frac{\mu(B(x,r)\cap \{u\le t\})}{\mu(B(x,r))}=0,
\]
so that $x\in I_{\{u>t\}}$. Thus for sufficiently large $i\in\N$, $x\in I_{\{u_i>t\}}$, and so $u_i^{\wedge}(x)\ge t\ge u^{\wedge}(x)-\eps$. Hence
\[
\liminf_{i\to\infty} u_i^{\wedge}(x)\ge u^{\wedge}(x)-\eps.
\]
Analogously, there exists
$t\in (u^{\vee}(x),u^{\vee}(x)+\eps)\cap T$, and then $x\in O_{\{u>t\}}$. For sufficiently large $i\in\N$, $x\in O_{\{u_i>t\}}$, and thus
\[
 \limsup_{i\to\infty} u_i^{\vee}(x)\le u^{\vee}(x)+\eps.
\]
Since $\eps>0$ was arbitrary, we obtain the result.
\end{proof}

In the remainder of this section, we give our main results on uniform convergence.
Recall that a set $A\subset X$ is \emph{$1$-quasiopen} if for every $\eps>0$ there is an
open set $G\subset X$ with $\capa_1(G)<\eps$ such that $A\cup G$ is open.

\begin{proposition}[{\cite[Proposition 4.2]{L3}}]\label{prop:set of finite perimeter is quasiopen}
Let $\Omega\subset X$ be an open set and let $E\subset X$ be a $\mu$-measurable set with
$P(E,\Omega)<\infty$. Then the sets $I_E\cap\Omega$ and $O_E\cap\Omega$ are $1$-quasiopen.
\end{proposition}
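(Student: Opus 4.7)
By considering $X\setminus E$, which also has finite perimeter in $\Omega$ and satisfies $I_{X\setminus E}=O_E$, it suffices to prove that $I_E\cap\Omega$ is $1$-quasiopen. Fix $\eps>0$; I must construct an open $G\subset X$ with $\capa_1(G)<\eps$ such that $(I_E\cap\Omega)\cup G$ is open. The plan is to exhibit $I_E\cap\Omega$ as the intersection of a decreasing sequence of open sets (modulo an $\mathcal H$-null, hence $\capa_1$-null, set) and then control the tail by a boxing-inequality argument in the spirit of Lemma~\ref{lem:uniform measure theoretic interior}.

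For each $n\in\N$, let
\[
U_n:=\bigl\{x\in\Omega:\ \exists\,r\in(0,1/n)\text{ with }\mu(\overline B(x,r)\setminus E)<\gamma\mu(B(x,r))\bigr\},
\]
where $\gamma$ is the constant from \eqref{eq:definition of gamma}. Since $y\mapsto\mu(\overline B(y,r)\setminus E)$ is upper semicontinuous and $y\mapsto\mu(B(y,r))$ is lower semicontinuous, the strict inequality persists in a neighborhood of each $x\in U_n$, so $U_n$ is open. Three observations identify $\bigcap_n U_n$: points of $I_E\cap\Omega$ belong to every $U_n$ (take $r$ generic and small so that $\mu(\partial B(x,r))=0$ and the density of $X\setminus E$ is $<\gamma$); points of $O_E\cap\Omega$ leave $U_n$ for large $n$ (density of $X\setminus E$ tends to $1$); and by \eqref{eq:definition of gamma}, for $\mathcal H$-a.e.\ $x\in\partial^*E\cap\Omega$ the liminf density of $X\setminus E$ is at least $\gamma$, so such $x$ also leave $U_n$ for large $n$. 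Hence $\bigcap_n U_n=(I_E\cap\Omega)\cup N$ for an $\mathcal H$-null set $N$, and $\capa_1(N)=0$ by \eqref{eq:capacity and Hausdorff content} together with Lemma~\ref{lem:Hausdorff content and measure}.

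The heart of the proof, and its main obstacle, is the tail estimate $\capa_1(U_n\setminus(I_E\cap\Omega))\to 0$ as $n\to\infty$. I would adapt the boxing technique from Lemma~\ref{lem:uniform measure theoretic interior}: given $x\in U_n\setminus I_E$ with witness $r_x<1/n$, the density of $X\setminus E$ at $x$ does not tend to $0$, so its liminf is bounded below at $x$ (by $\gamma$ modulo an $\mathcal H$-null set if $x\in\partial^*E$, and by $1$ if $x\in O_E$). A halving/doubling argument then locates a critical radius $\rho_x\le r_x$ at which both $\mu(B(x,\rho_x)\cap E)$ and $\mu(B(x,\rho_x)\setminus E)$ are comparable to $\mu(B(x,\rho_x))$ up to a dimensional constant, and the relative isoperimetric inequality \eqref{eq:relative isoperimetric inequality} gives $\mu(B(x,\rho_x))/\rho_x \le C\, P(E,B(x,\lambda\rho_x))$. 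Applying the $5$-covering theorem and \eqref{eq:capacity and Hausdorff content} yields
\[
\capa_1\bigl(U_n\setminus(I_E\cap\Omega)\bigr)\ \le\ C'\, P\bigl(E,(U_n\setminus(I_E\cap\Omega))^{\lambda/n}\bigr),
\]
and since $P(E,\cdot)$ is a finite Radon measure supported in $\partial^*E$, and the nested neighborhoods on the right shrink into the $\mathcal H$-null set $N$ identified above, the right-hand side tends to $0$.

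To conclude, choose $n$ so that $\capa_1(U_n\setminus(I_E\cap\Omega))<\eps/2$, and use outer regularity of $\capa_1$ to find an open $G\supset U_n\setminus(I_E\cap\Omega)$ with $\capa_1(G)<\eps$. Because $I_E\cap\Omega\subset U_n$, we obtain $(I_E\cap\Omega)\cup G=U_n\cup G$, which is open. The delicate point is calibrating the density thresholds so that the halving/doubling step genuinely produces a scale at which the minimum in the isoperimetric inequality is bounded below by a fixed fraction of $\mu(B)$; this may require replacing $\gamma$ in the definition of $U_n$ by a strictly smaller constant such as $\gamma/(2C_d)$, so that the transition between \textquotedblleft density $<\gamma'$\textquotedblright{} at $r_x$ and \textquotedblleft density $\ge\gamma$\textquotedblright{} at smaller scales is separated by the doubling factor.
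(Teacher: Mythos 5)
This proposition is not proved in the paper at all --- it is stated with a citation to~\cite[Proposition~4.2]{L3}, so there is no argument here to compare yours against line by line. What I can do is assess your sketch on its own terms.

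Your overall design is natural and in the right spirit: reduce to $I_E$ by passing to $X\setminus E$; build a decreasing family of open sets $U_n\supset I_E\cap\Omega$ encoding a uniform density bound at scales below $1/n$; and try to show $\capa_1\bigl(U_n\setminus(I_E\cap\Omega)\bigr)\to 0$ by a boxing--isoperimetric argument so that outer regularity of $\capa_1$ finishes the job via $(I_E\cap\Omega)\cup G=U_n\cup G$. The openness argument for $U_n$ (closed ball in the numerator, open ball in the denominator, semicontinuity in both) is correct, and the identification $\bigcap_n U_n=(I_E\cap\Omega)\cup N$ with $\mathcal H(N)=0$ is essentially fine, modulo the calibration of the threshold that you flag yourself (with threshold exactly $\gamma$, a point of $\partial^*E$ where the $\liminf$ of the density of $X\setminus E$ equals $\gamma$ need not leave $U_n$; replacing $\gamma$ by, say, $\gamma/2$ fixes this).

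The gap is in what you yourself call the heart of the proof. Your boxing argument, if carried out, gives
\[
\capa_1\bigl(U_n\setminus(I_E\cap\Omega)\bigr)\ \lesssim\ P\bigl(E,(U_n\setminus(I_E\cap\Omega))^{\lambda/n}\bigr),
\]
and you assert the right-hand side tends to $0$ \emph{because} ``the nested neighborhoods shrink into the $\mathcal H$-null set $N$.'' This is false as stated: for a decreasing sequence of sets $A_n$ and radii $a_n\to 0$ one has $\bigcap_n A_n^{a_n}=\bigcap_n\overline{A_n}$, not $\bigcap_n A_n$. So the neighborhoods shrink into $\bigcap_n\overline{U_n\setminus(I_E\cap\Omega)}$, which is in general strictly larger than $N$. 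Since $P(E,\cdot)$ is concentrated on $\partial^*E$, you would need $\mathcal H\bigl(\partial^*E\cap\Omega\cap\bigcap_n\overline{U_n\setminus(I_E\cap\Omega)}\bigr)=0$, but a point of $\partial^*E$ that is never in any $U_n$ (hence not in $N$) can perfectly well be a limit point of $U_n\setminus(I_E\cap\Omega)$ for every $n$, and nothing in your construction rules this out. Thus the tail estimate --- the step the whole proof hinges on --- is not established. A secondary, more routine, issue: the balls $B(x_k,\lambda\rho_k)$ in the covering must lie inside $\Omega$ for $P(E,\cdot)$ of them to make sense, which requires a compact exhaustion of $\Omega$ as in the proof of Proposition~\ref{prop:strict to pointwise convergence for sets}; this is fixable but currently missing.

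In short: the architecture is reasonable and close to what one expects a boxing-inequality proof to look like, but the convergence $P\bigl(E,(U_n\setminus(I_E\cap\Omega))^{\lambda/n}\bigr)\to 0$ is asserted via an incorrect set-theoretic identity, and without it the proof does not close. You would need a genuinely different mechanism for the tail --- for instance separating $U_n\cap\partial^*E$ (handled by $\mathcal H_1\le\mathcal H$, downward continuity of $\mathcal H\llcorner\partial^*E$, and \eqref{eq:capacity and Hausdorff content}) from $U_n\cap O_E$, and for the latter arranging the boxing balls so that their union is controlled by a quantity you can actually make small, rather than by a neighborhood whose limit you cannot control.
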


\begin{lemma}[{\cite[Eq. (4.1)]{LaSh}}]\label{lem:weak L1 estimate for capacity}
If $v\in\BV(X)$ and $t>0$, then
\[
\capa_1(\{v^{\vee}>t\})\le  C_2\frac{\Vert v\Vert_{\BV(X)}}{t}
\]
for some constant $C_2=C_2(C_d,C_P,\lambda)$.
\end{lemma}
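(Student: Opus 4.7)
The plan is to reduce the weak $L^1$ estimate to a boxing inequality for sets of finite perimeter, then average against the coarea formula. After replacing $v$ by $|v|$ (which does not increase $\Vert v\Vert_{\BV(X)}$, and for which $\{v^{\vee}>t\}\subset \{|v|^{\vee}>t\}$ whenever $t>0$, since $\{|v|>s\}\supset\{v>s\}$), I may assume $v\ge 0$. For each $t'\in(t/2,t)$ set $E_{t'}:=\{v>t'\}$. If $v^{\vee}(x)>t$, by definition there exists $s>t$ with $\limsup_{r\to 0}\mu(B(x,r)\cap\{v>s\})/\mu(B(x,r))>0$; since $\{v>s\}\subset E_{t'}$, the upper density of $E_{t'}$ at $x$ is also positive, so $x\in I_{E_{t'}}\cup\partial^{*}E_{t'}$. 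Consequently
\[
\{v^{\vee}>t\}\subset I_{E_{t'}}\cup\partial^{*}E_{t'}\quad\text{for every }t'\in(t/2,t).
\]

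The central technical ingredient I would establish is the boxing inequality: for any $\mu$-measurable $E\subset X$ with $\mu(E)+P(E,X)<\infty$,
\[
\capa_{1}\bigl(I_{E}\cup\partial^{*}E\bigr)\le C\bigl(\mu(E)+P(E,X)\bigr),
\]
with $C=C(C_{d},C_{P},\lambda)$. The proof proceeds in the spirit of the covering arguments used in Proposition \ref{prop:strict to pointwise convergence for sets} and Lemma \ref{lem:uniform measure theoretic interior}. For each $x\in I_{E}\cup\partial^{*}E$, the finiteness of $\mu(E)$ together with doubling yields a smallest radius $r_x>0$ at which $\mu(B(x,r_x)\cap E)$ and $\mu(B(x,r_x)\setminus E)$ are both comparable to $\mu(B(x,r_x))$; the relative isoperimetric inequality \eqref{eq:relative isoperimetric inequality} then gives $\mu(B(x,r_x))/r_x\lesssim P(E,B(x,\lambda r_x))$. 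A $5$-covering argument produces a countable disjointified family $\{B(x_k,\lambda r_k)\}$ whose dilates cover $I_{E}\cup\partial^{*}E$, and summing $\mu(B(x_k,5\lambda r_k))/(5\lambda r_k)$ controls the Hausdorff $1$-content; via \eqref{eq:capacity and Hausdorff content} this converts to a capacity bound, with the $\mu(E)$ term on the right absorbing the contribution of those $x$ whose stopping radius is large.

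Combining the inclusion with the boxing inequality gives, for a.e.\ $t'\in(t/2,t)$ (namely those $t'$ for which $P(E_{t'},X)<\infty$, which holds almost everywhere by coarea),
\[
\capa_{1}(\{v^{\vee}>t\})\le C\bigl(\mu(E_{t'})+P(E_{t'},X)\bigr).
\]
Averaging this constant bound over $t'\in(t/2,t)$, using $\int_{t/2}^{t}\mu(\{v>t'\})\,dt'\le \Vert v\Vert_{L^{1}(X)}$ together with the coarea formula \eqref{eq:coarea}, yields
\[
\capa_{1}(\{v^{\vee}>t\})\le \frac{2C}{t}\int_{t/2}^{t}\bigl(\mu(\{v>t'\})+P(\{v>t'\},X)\bigr)\,dt' \le \frac{2C}{t}\Vert v\Vert_{\BV(X)},
\]
giving the claim with $C_{2}=2C$. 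The main obstacle is the boxing inequality itself: while the covering scheme is standard, some care is needed to reconcile the stopping radii (which may exceed $1$) with the radius constraint in the definition of $\mathcal{H}_{1}$ and, correspondingly, in \eqref{eq:capacity and Hausdorff content}; this is handled by capping the radii and letting the $\mu(E)$ term absorb the resulting error. Once the boxing inequality is in hand, the reduction via level sets and averaging is routine.
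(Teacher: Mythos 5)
The paper does not prove this lemma; it is stated with the citation \cite[Eq.\ (4.1)]{LaSh}, so there is no in-paper proof to compare against. Your reconstruction -- reduce to $v\ge 0$, note that $\{v^{\vee}>t\}\subset I_{E_{t'}}\cup\partial^{*}E_{t'}$ for every $t'<t$, bound $\capa_1(I_E\cup\partial^*E)$ by a boxing inequality, and average over $t'\in(t/2,t)$ via the coarea formula -- is exactly the standard route to such weak-type capacity estimates, and the averaging step, the reduction to $v\ge 0$, and the level-set inclusion are all correct as written. The one place you should be more careful is the boxing inequality itself, which you only sketch. Your phrase ``a smallest radius $r_x$ at which $\mu(B(x,r_x)\cap E)$ and $\mu(B(x,r_x)\setminus E)$ are both comparable to $\mu(B(x,r_x))$'' is fine for $x\in I_E$ (stop the first dyadic radius where the density of $E$ drops below, say, $1/2$; if no such radius below the cap exists, the disjoint-ball/$\mu(E)$ argument takes over), but it is not literally correct for $x\in\partial^*E$: there both upper densities are positive, yet they need not be positive along a common sequence of radii, and there is no ``smallest'' such radius. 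The cleaner and standard fix is to handle $\partial^*E$ separately via \eqref{eq:def of theta} and \eqref{eq:capacity and Hausdorff content}, giving $\capa_1(\partial^*E)\le 2C_d\mathcal H_1(\partial^*E)\le 2C_d\mathcal H(\partial^*E)\le 2C_d\alpha^{-1}P(E,X)$, and reserve the stopping-radius covering argument for $I_E$ alone. With that adjustment your argument goes through and matches the approach one expects in the cited reference.
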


\begin{theorem}\label{thm:uniform convergence outside jump set}
Let $\Omega\subset X$ be an open set, and let $u_i,u\in \BV(\Omega)$ such that $u_i\to u$ in $L^1(\Omega)$ and $\Vert Du_i\Vert(\Omega)\to \Vert Du\Vert(\Omega)$.
Then there exists a subsequence (not relabeled) such that
whenever $K\subset \Omega\setminus S_u$ is compact and $\eps>0$, there exists
an open set $U\subset X$ with $\capa_1(U)<\eps$ and an index $l\in\N$ such that
\[
\widetilde{u}(x)-\eps\le u_i^{\wedge}(x)
\le
u_i^{\vee}(x)\le \widetilde{u}(x)+\eps.
\]
for all $x\in K\setminus U$ and $i\ge l$.
\end{theorem}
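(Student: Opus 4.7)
The plan is to adapt the proof of Theorem~\ref{thm:strict to pointwise convergence}, replacing the pointwise Proposition~\ref{prop:strict to pointwise convergence for sets} with the uniform Proposition~\ref{prop:uniform convergence for sets of finite perimeter}. Following the first part of that earlier proof verbatim, I would pass to a subsequence such that for every $t$ in a countable dense set $T\subset\R$ both $\ch_{\{u_i>t\}}\to\ch_{\{u>t\}}$ in $L^1(\Omega)$ and $P(\{u_i>t\},\Omega)\to P(\{u>t\},\Omega)<\infty$. Then for each $t\in T$ I would apply Proposition~\ref{prop:uniform convergence for sets of finite perimeter} twice, once to the pair $(\{u_i>t\},\{u>t\})$ for the $I_{E_i}$ conclusion and once to the complements for the analogous $O_{E_i}$ conclusion, and use a diagonal argument to extract a single subsequence that simultaneously enjoys the uniform conclusion of that proposition for every $t\in T$.

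Fix now a compact $K\subset\Omega\setminus S_u$ and $\eps>0$. The critical preparatory step is to reduce $K$ to a compact subset on which $\widetilde u$ is finite, bounded, and continuous. I would accomplish this via the $1$-quasiopenness of the sets $I_{\{u>t\}}\cap\Omega$ and $O_{\{u>t\}}\cap\Omega$ given by Proposition~\ref{prop:set of finite perimeter is quasiopen}: for each $t\in T$ pick open sets $G_t^\pm$ with $\sum_{t\in T}\capa_1(G_t^+\cup G_t^-)<\eps/4$ such that $(I_{\{u>t\}}\cap\Omega)\cup G_t^+$ and $(O_{\{u>t\}}\cap\Omega)\cup G_t^-$ are open. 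Let $V_1$ be the union of all these $G_t^\pm$, further enlarged by a $\capa_1$-null open superset of the $\mathcal H$-null set on which $\widetilde u=\pm\infty$ (which exists by \cite[Lemma~3.2]{KKST3} together with \eqref{eq:capacity and Hausdorff content} and Lemma~\ref{lem:Hausdorff content and measure}); then $\capa_1(V_1)<\eps/4$. On $\Omega\setminus S_u$ one has $\{\widetilde u>s\}=\{u^{\wedge}>s\}=\bigcup_{t>s,\,t\in T}I_{\{u>t\}}$ and symmetrically for $\{\widetilde u<s\}$, so in the subspace topology on $K\setminus V_1$ the function $\widetilde u$ is both lower and upper semicontinuous, hence continuous, and consequently bounded: $|\widetilde u|\le M$ on $K\setminus V_1$ for some $M>0$.

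Next, choose a partition $s_{-1}<s_0<\cdots<s_{N+2}$ inside $T$ with $s_{-1}<-M$, $s_{N+2}>M$, and consecutive gaps strictly less than $\eps/3$. Continuity of $\widetilde u$ on the compact set $K\setminus V_1$ makes the sets
\[
K_j^+:=\{x\in K\setminus V_1:\widetilde u(x)\ge s_j\},\qquad K_j^-:=\{x\in K\setminus V_1:\widetilde u(x)\le s_{j+1}\}
\]
compact for $j=0,\dots,N+1$; and since $x\notin S_u$ gives $u^{\wedge}(x)=u^{\vee}(x)=\widetilde u(x)$, one checks $K_j^+\subset I_{\{u>s_{j-1}\}}\cap\Omega$ and $K_j^-\subset O_{\{u>s_{j+2}\}}\cap\Omega$. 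Applying Proposition~\ref{prop:uniform convergence for sets of finite perimeter} to these finitely many compact sets (with error $\eps/(8(N+2))$ each) produces open sets $U_j^\pm$ and an index $l$ so that for every $i\ge l$ we have $K_j^+\setminus U_j^+\subset I_{\{u_i>s_{j-1}\}}$ and $K_j^-\setminus U_j^-\subset O_{\{u_i>s_{j+2}\}}$. Setting $U:=V_1\cup\bigcup_j(U_j^+\cup U_j^-)$ gives $\capa_1(U)<\eps$; for $x\in K\setminus U$ with $\widetilde u(x)\in[s_j,s_{j+1})$, we conclude $u_i^{\wedge}(x)\ge s_{j-1}>\widetilde u(x)-\eps$ and $u_i^{\vee}(x)\le s_{j+2}<\widetilde u(x)+\eps$, since the partition spacing forces $s_{j+2}-s_{j-1}<\eps$.

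The main obstacle is precisely the preparatory step of the second paragraph: the natural candidates $K\cap I_{\{u>t\}}$ for the compact sets required by Proposition~\ref{prop:uniform convergence for sets of finite perimeter} fail to be closed, and $\widetilde u$ may be unbounded on $K$, so a direct partitioning of its range into finitely many pieces is not available without modification. The resolution is essentially a $1$-quasicontinuity statement for $\widetilde u$ on $\Omega\setminus S_u$, extracted by diagonalizing Proposition~\ref{prop:set of finite perimeter is quasiopen} over the countable dense set $T$; once this is in hand, the rest of the proof is a routine coarea-plus-Proposition~\ref{prop:uniform convergence for sets of finite perimeter} argument modeled on the proof of Theorem~\ref{thm:strict to pointwise convergence}.
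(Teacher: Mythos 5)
Your proof is correct, but it takes a genuinely different route from the paper's. Both proofs begin identically: pass to a subsequence so that for a countable dense $T\subset\R$ one has $\ch_{\{u_i>t\}}\to\ch_{\{u>t\}}$ in $L^1(\Omega)$ and $P(\{u_i>t\},\Omega)\to P(\{u>t\},\Omega)<\infty$ for all $t\in T$, then diagonalize so that Proposition~\ref{prop:uniform convergence for sets of finite perimeter} applies simultaneously to every $\{u>t\}$ and $\{u\le t\}$, $t\in T$. Where you diverge is in how to produce the compact subsets of the sets $I_{\{u>t\}}\cap\Omega$ (and control the possible unboundedness of $\widetilde u$ on $K$). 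You establish a genuine $1$-quasicontinuity statement for $\widetilde u$ on $K$: applying Proposition~\ref{prop:set of finite perimeter is quasiopen} over \emph{all} $t\in T$ at once (with summable capacities), you obtain one small-capacity open set $V_1$ off which $\widetilde u|_{K\setminus V_1}$ is continuous, hence bounded on the compact set $K\setminus V_1$; your level sets $K_j^\pm$ are then automatically compact and sit strictly inside the relevant $I_{\{u>s_{j-1}\}}$ resp.\ $O_{\{u>s_{j+2}\}}$. The paper instead controls large values of $u$ by the weak-type capacitary estimate of Lemma~\ref{lem:weak L1 estimate for capacity} (applied to $u\eta$ for a cutoff $\eta$), and then handles the \emph{finitely many} levels $t_j$ one at a time: it covers $\partial^*\{u>t_j\}\cap K$ by open sets $U_j$ whose pairwise intersections have small capacity (exploiting that $K\cap S_u=\emptyset$ forces the boundaries $\partial^*\{u>t_j\}\cap K$ to be pairwise disjoint) and invokes quasiopenness only at those finitely many levels to get the compactness of $I_{\{u>t_j\}}\cap K\setminus(U_j\cup V_j)$. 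Your approach is structurally cleaner and avoids both Lemma~\ref{lem:weak L1 estimate for capacity} and the disjoint-boundaries trick, at the cost of the preparatory quasicontinuity reduction; the paper's approach is more hands-on and keeps the amount of quasiopenness used to a finite quantity per application. One small wording issue: the set on which $\widetilde u=\pm\infty$ has $\mathcal H$-measure zero (hence $\mathcal H_1$-content zero, hence $\capa_1$ zero), so by outer regularity it admits an open superset of arbitrarily small capacity, not a ``$\capa_1$-null open superset''; this does not affect the argument.
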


\begin{proof}
Passing to a subsequence (not relabeled) just as in the proof of Theorem
\ref{thm:strict to pointwise convergence}, we find a countable dense set $T\subset \R$
such that for all $t\in T$ we have
$\ch_{\{u_i>t\}}\to \ch_{\{u>t\}}$ in $L^1(\Omega)$ and
$P(\{u_i>t\},\Omega)\to P(\{u>t\},\Omega)<\infty$.
By a diagonal argument, pick a subsequence (not relabeled) such that the conclusion of
Proposition \ref{prop:uniform convergence for sets of finite perimeter} is true with $E$
replaced by any set
$\{u>t\}$ or $\{u\le t\}$ with $t\in T$.

Fix a compact set $K\subset \Omega\setminus S_u$, and fix $\eps>0$.
Choose $M\in\N$ as follows.
Choose a function $\eta\in \Lip_c(X)$ with $0\le \eta\le 1$, $\eta=1$ in $K$, and $\eta=0$ in $X\setminus \Omega$. Then $u\eta \in\BV(X)$ by e.g. \cite[Lemma 3.2]{HKLS}, and so by Lemma \ref{lem:weak L1 estimate for capacity}
\[
\capa_1(\{u^{\vee}>M\}\cap K)
= \capa_1(\{(u\eta)^{\vee}>M\}\cap K)
\le C_2\frac{\Vert u\eta\Vert_{\BV(X)}}{M}.
\]
By similarly estimating the set $\{u^{\wedge}<-M\}$,
we can fix a sufficiently large $M\in\N$ so that
\begin{equation}\label{eq:choice of M}
\capa_1(\{u^{\wedge}<-M\}\cap K)+\capa_1(\{u^{\vee}>M\}\cap K)+1/M<\eps/2.
\end{equation}
Then take $L\in\N$ and a strictly increasing collection of numbers $S:=\{t_j\}_{j=1,\ldots,L}\subset T$ such that
$t_{j+1}-t_j<1/M$ for all $j\in\{1,\ldots,L-1\}$, and $t_2<-M$ and $t_{L-1}>M$.

Take open sets $U_j\supset \partial^*\{u>t_j\}\cap K$ with
$\capa_1(U_j\cap U_{j+1})<\eps/4L$ for all $j\in\{1,\ldots,L-1\}$; this can
be done as follows.
For each $j$, since $P(\{u>t_j\},\Om)<\infty$, by \eqref{eq:def of theta} we have
$\mathcal H(\partial^*\{u>t_j\}\cap \Om)<\infty$, and thus we can
pick compact sets $F_j\subset \partial^*\{u>t_j\}\cap K$ such that
$\mathcal H(\partial^*\{u>t_j\}\cap K\setminus F_j)<\eps/16C_d L$. Then
$\capa_1(\partial^*\{u>t_j\}\cap K\setminus F_j)<\eps/8L$
by \eqref{eq:capacity and Hausdorff content},
and so we find
open sets $\widehat{U}_j\supset \partial^*\{u>t_j\}\cap K\setminus F_j$
with $\capa_1(\widehat{U}_j)<\eps/8L$.
Moreover, since $S_u\cap K=\emptyset$, it follows that $\partial^*\{u>t_j\}\cap \partial^*\{u>t_k\} \cap K=\emptyset$ for any $j\neq k$; this follows in a straightforward manner
from the definitions, see e.g. \cite[Proposition 5.2]{AMP}. Thus we can pick pairwise disjoint open sets $\widetilde{U}_j\supset F_j$.
Then we can define $U_j:=\widetilde{U}_j\cup \widehat{U}_j$, $j\in\{1,\ldots,L\}$.

By Proposition \ref{prop:set of finite perimeter is quasiopen},
we can also find open sets $V_j\subset X$ with $\capa_1(V_j)<\eps/8L$ such that each
$(O_{\{u>t_j\}}\cap \Om)\cup V_j$ is an open set. Thus each $((I_{\{u>t_j\}}\cup \partial^*\{u>t_j\})\cup (X\setminus\Om))\setminus V_j$ is a closed set, and so each
$(I_{\{u>t_j\}}\cup \partial^*\{u>t_j\})\cap K\setminus V_j$ is a closed set.
Then each
\[
I_{\{u>t_j\}}\cap K\setminus (U_j\cup V_j)
=(I_{\{u>t_j\}}\cup \partial^*\{u>t_j\})\cap K\setminus (U_j\cup V_j)
\]
is a compact subset of $I_{\{u>t_j\}}\cap\Omega$.
By the conclusion of Proposition \ref{prop:uniform convergence for sets of finite perimeter},
there exist open sets $W_j\subset X$ with $\capa_1(W_j)<\eps/8L$ such that
for some $l\in\N$ and all $j\in\{1,\ldots,L\}$,
\begin{equation}\label{eq:conclusion for sets in uniform convergence proof}
I_{\{u>t_j\}}\cap K\setminus (U_j\cup V_j\cup W_j)\subset I_{\{u_i>t_j\}}
\end{equation}
for all $i\ge l$.

Let 
\[
G:= K\cap \Bigg(\{u^{\wedge}<-M\}\cup \{u^{\vee}>M\}
\cup \bigcup_{j=1}^{L}(V_j\cup W_j)
\cup \bigcup_{j=1}^{L-1}(U_j\cap U_{j+1})\Bigg),
\]
so that $\capa_1(G)<\eps$. Then take an open set $U\supset G$ with $\capa_1(U)<\eps$.
Fix $x\in K\setminus U$.
For some $j\in\{2,\ldots,L-1\}$ we have
$t_j\in (\widetilde{u}(x)-1/M,\widetilde{u}(x))$. Now
\[
\lim_{r\to 0}\frac{\mu(B(x,r)\cap \{u\le t_j\})}{\mu(B(x,r))}=0,
\]
so that $x\in I_{\{u>t_j\}}$. Then clearly also $x\in I_{\{u>t_{j-1}\}}$.
Then by the definition of the set $G\niton x$, we conclude
\[
x\in \big(I_{\{u>t_j\}}\setminus (U_j\cup V_j\cup W_j)\big)\cup \big(I_{\{u>t_{j-1}\}}\setminus
(U_{j-1}\cup V_{j-1}\cup W_{j-1})\big).
\]
By \eqref{eq:conclusion for sets in uniform convergence proof},
$x\in I_{\{u_i>t_j\}}\cup I_{\{u_i>t_{j-1}\}}=I_{\{u_i>t_{j-1}\}}$ for all $i\ge l$,
and so
\[
u_i^{\wedge}(x)\ge t_{j-1}\ge t_{j}-1/M\ge \widetilde{u}(x)-2/M.
\]
Since we required $1/M<\eps/2$, we get for all $i\ge l$
\[
u_i^{\wedge}(x)\ge \widetilde{u}(x)-\eps.
\]
By making $l$ and $U$ bigger, if necessary, we get analogously
\[
 u_i^{\vee}(x)\le\widetilde{u}(x)+\eps
\]
for all $x\in K\setminus U$ and all $i\ge l$.
\end{proof}

\begin{corollary}\label{cor:uniform convergence outside jump set}
Let $\Omega\subset X$ be an open set, and let $u_i,u\in \BV(\Omega)$ such that $u_i\to u$ in $L^1(\Omega)$ and $\Vert Du_i\Vert(\Omega)\to \Vert Du\Vert(\Omega)$.
Then there exists a subsequence (not relabeled) such that
whenever $K\subset \Omega\setminus S_u$ is compact and $\eps>0$,
there exists
an open set $U\subset X$ with $\capa_1(U)<\eps$ such that
$\widetilde{u}_i\to \widetilde{u}$
uniformly in $K\setminus U$.
\end{corollary}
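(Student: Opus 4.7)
The plan is to deduce this directly from Theorem \ref{thm:uniform convergence outside jump set}. The key algebraic observation is that since $\widetilde{u}_i=(u_i^{\wedge}+u_i^{\vee})/2$, the sandwich
\[
\widetilde{u}(x)-\eps\le u_i^{\wedge}(x)\le u_i^{\vee}(x)\le \widetilde{u}(x)+\eps
\]
supplied by that theorem immediately yields $|\widetilde{u}_i(x)-\widetilde{u}(x)|\le \eps$ for all $x\in K\setminus U$ and $i\ge l$. So the only real issue is that the theorem gives a fixed $\eps$-approximation rather than convergence, and this will be cured by a standard diagonal/iteration argument.

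First I fix, once and for all, the subsequence furnished by Theorem \ref{thm:uniform convergence outside jump set}; crucially, that subsequence is chosen independently of any particular compact set or tolerance, so the same one will serve for the corollary. Now given a compact set $K\subset\Omega\setminus S_u$ and $\eps>0$, I apply the theorem \emph{repeatedly} to this same $K$: for each $k\in\N$, use the tolerance $\eps_k:=\min\{\eps,1\}\cdot 2^{-k}$ to obtain an open set $U_k\subset X$ with $\capa_1(U_k)<\eps_k$ and an index $l_k\in\N$ such that
\[
|\widetilde{u}_i(x)-\widetilde{u}(x)|\le \eps_k \qquad \textrm{for all } x\in K\setminus U_k \textrm{ and all } i\ge l_k,
\]
using the algebraic observation above.

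Setting $U:=\bigcup_{k=1}^{\infty}U_k$, which is open, the countable subadditivity of the $1$-capacity (a standard property of Newton--Sobolev capacities) gives $\capa_1(U)\le\sum_{k=1}^{\infty}\capa_1(U_k)<\eps$. If $x\in K\setminus U$, then $x\in K\setminus U_k$ for every $k$; hence for any prescribed $\delta>0$ I select $k$ with $\eps_k<\delta$, and conclude $|\widetilde{u}_i(x)-\widetilde{u}(x)|\le \eps_k<\delta$ for all $i\ge l_k$. Since $l_k$ does not depend on the point $x\in K\setminus U$, this is uniform convergence of $\widetilde{u}_i$ to $\widetilde{u}$ on $K\setminus U$, completing the proof.

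There is no substantive obstacle here: the heavy lifting has already been done in Theorem \ref{thm:uniform convergence outside jump set}. The proposed proof is essentially a routine packaging together with countable subadditivity of $\capa_1$, the latter being standard in the Newton--Sobolev setting (see e.g.\ \cite{BB}).
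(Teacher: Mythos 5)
Your proposal is correct, and it supplies exactly the argument the paper leaves implicit (the paper states the corollary without proof immediately after Theorem \ref{thm:uniform convergence outside jump set}, treating it as immediate). The two ingredients you identify are precisely what is needed: the sandwich from the theorem forces $\widetilde{u}(x)-\eps\le\widetilde{u}_i(x)\le\widetilde{u}(x)+\eps$ on $K\setminus U$ for $i\ge l$ since $\widetilde{u}_i=(u_i^{\wedge}+u_i^{\vee})/2$, and the passage from a single-tolerance statement to genuine uniform convergence is achieved by iterating over $\eps_k\searrow 0$, taking $U=\bigcup_k U_k$, and invoking countable subadditivity of $\capa_1$. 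The quantifier structure of the theorem (a single subsequence that works for every compact $K$ and every $\eps$) is used correctly, so the iteration is legitimate. No gap.
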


\section{Examples}

In Theorem \ref{thm:strict to pointwise convergence}, there are three obvious ways in which  the result could potentially be strengthened, presented in the following questions:

\begin{itemize}
\item Does the pointwise convergence hold for the original sequence, instead of a subsequence?
\item Can we obtain $u_i^{\wedge}(x)\to u^{\wedge}(x)$ and $u_i^{\vee}(x)\to u^{\vee}(x)$ for $\mathcal H$-almost every $x\in S_u$?
\item The sets where we do not obtain pointwise convergence are known to be $\mathcal H$-negligible; can we further restrict this family?
\end{itemize}

The following three examples show that the answer to each of these questions is no.

\begin{example}
Let $X=\R^2$ (unweighted) and for each $k\in\N$, let
\[
E_i:= B((2^{-k+1}(i-2^{k-1}),0),2^{-k+1}),\qquad i=2^{k-1},\ldots,2^{k}-1.
\]
Then we have
$\Vert \ch_{E_i}\Vert_{\BV(\R^2)}\to 0$ as $i\to \infty$, but for all $x=(x_1,0)$ with $0\le x_1< 1$ there exists infinitely many $i\in\N$ such that $x\in E_i\subset I_{E_i}$.
Clearly $\mathcal H(\{(x_1,x_2):\, 0\le x_1\le 1, x_2=0\})>0$; note that the codimension one Hausdorff measure $\mathcal H$ is comparable to the usual $1$-dimensional Hausdorff measure.

Thus we need to pass to a subsequence --- for example $(u_{i_j})_{j=1}^{\infty}$
with $i_j=2^{j-1}$ will do.
\end{example}

\begin{example}\label{ex:failure of convergence in jump set}
Let $X=\R$ (unweighted). Let $u:=\ch_{(0,1)}$ and
\[
u_i(x):=\max\{0, \min\{1,1/4+ix\}\}\ch_{(-\infty,1]}(x),\quad i\in\N.
\]
Clearly $u_i\to u$ in $L^1(\R)$ and $\Vert Du_i\Vert(\R)=2=\Vert Du\Vert(\R)$.
However,
\[
u_i^{\wedge}(0)=u_i^{\vee}(0)\equiv 1/4\not\to 0=u^{\wedge}(0).
\]
Similarly, we do not have convergence to $u^{\vee}(0)=1$
--- or to $\widetilde{u}(0)=1/2$.
Yet $\mathcal H(0)=2$ (note that the codimension one Hausdorff measure $\mathcal H$ is now exactly twice the usual $0$-dimensional Hausdorff measure). Thus we cannot have pointwise convergence $\mathcal H$-almost everywhere in the jump set, for any subsequence.
\end{example}

If $\Om\subset X$ is an open set and
we even have $u_i\to u$ in $\BV(\Omega)$ (that is, in the $\BV$ norm), then for a subsequence (not relabeled) we have $u_i^{\wedge}(x)\to u^{\wedge}(x)$ and $u_i^{\vee}(x)\to u^{\vee}(x)$ for $\mathcal H$-almost every $x\in \Omega$; this follows e.g. from \cite[Remark 4.1, Lemma 4.2]{LaSh}. Thus with such a stronger assumption, we do obtain
pointwise convergence even in the jump set.

\begin{example}\label{ex:failure of convergence in given negligible set}
Let $u\equiv 0$ and let $A\subset X$ be a $\mathcal H$-negligible set. For each $i\in\N$, take a covering $\{B(x_j^i,r_j^i)\}_{j\in\N}$ of $A$ such that $r^i_j\le 1$ and
\[
\sum_{j=1}^{\infty}\frac{\mu(B(x_j^i,r_j^i))}{r^i_j}<\frac{1}{i}.
\]
By applying the coarea formula \eqref{eq:coarea} with $v=d(\cdot,x_j^i)$ and $\Omega=B(x_j^i,2r_j^i)$, for each $i,j\in\N$ we find
$s^i_j\in [r^i_j,2r^i_j]$ such that
\[
P(B(x_j^i,s^i_j),X)\le 2 C_d \frac{\mu(B(x_j^i,r_j^i))}{r^i_j}.
\]
For each $i\in\N$, let $E_i:=\bigcup_{j=1}^{\infty}B(x_j^i,s_j^i)$. Then
$A\subset E_i\subset I_{E_i}$ for all $i\in\N$. However, by lower semicontinuity and
\eqref{eq:Caccioppoli sets form an algebra},
\[
P(E_i,X)\le \sum_{j=1}^{\infty} P(B(x_j^i,s_j^i),X)<\frac{2C_d}{i}\to 0\qquad\textrm{as }i\to\infty.
\]
Also
\[
\Vert \ch_{E_i}\Vert_{L^1(X)}=\mu(E_i)\le \sum_{j=1}^{\infty}\mu(B(x_j^i,s_j^i))\le
C_d\sum_{j=1}^{\infty}\frac{\mu(B(x_j^i,r_j^i))}{r^i_j}<\frac{C_d}{i}\to 0.
\]
Thus setting $u_i:=\ch_{E_i}$, the assumptions of Theorem \ref{thm:strict to pointwise convergence} are satisfied.
In conclusion, given \emph{any} $\mathcal H$-negligible set, pointwise convergence can fail at each point in the set, for all subsequences.
\end{example}

Concerning our results on uniform convergence, note first that according to Egorov's theorem, if $A\subset X$, $\nu$ is a positive Radon measure of finite mass on $A$, and $v_i,v$ are $\nu$-measurable functions on $A$ such that $v_i(x)\to v(x)$ as $i\to\infty$ for $\nu$-almost every $x\in A$, then for any $\eps>0$ there exists $D\subset A$ with $\nu(D)<\eps$ such that $v_i\to v$ uniformly in $A\setminus D$.

However, if instead of a Radon measure we work with the $1$-capacity, a problem arises from the fact that the $1$-capacity is not a Borel measure. The following example demonstrates that a Egorov-type result fails even under very favorable conditions.

\begin{example}
Let $X=\R$ and let $\Omega=(0,1)$. Let $u\equiv 0$ and
\[
u_i(x):=
\begin{cases}
ix & \textrm{for }0< x\le 1/i, \\
2-ix & \textrm{for }1/i\le x\le 2/i, \\
0 & \textrm{for }2/i\le x< 1.
\end{cases}
\]
Then $u_i,u\in C(\Omega)$ and $u_i(x)\to u(x)$ for every $x\in \Omega$.
Fix $\eps\in (0,1)$. Since $\capa_1(\{x\})=2$ for every $x\in\R$, the condition
$\capa_1(D)<\eps$ implies $D=\emptyset$. However, we do not have that $u_i\to u$
uniformly in $\Omega$.

Equally well we can consider the closed unit interval, so even with a compact set and continuous functions, things can go wrong.
It is the condition $\Vert Du_i\Vert(\Omega)\to\Vert Du\Vert(\Omega)$ that allows us
to obtain uniform convergence in Theorem \ref{thm:uniform convergence outside jump set} and Corollary \ref{cor:uniform convergence outside jump set}.
\end{example}

Recall that according to Example \ref{ex:failure of convergence in jump set},
we cannot have even pointwise convergence in the jump set. If we consider the unit interval $(0,1)\subset X\setminus S_u$ in this example, it is clear that we do not have $\widetilde{u}_i\to \widetilde{u}$ uniformly in $(0,1)$, for any subsequence.
Since again $\capa_1(\{x\})=2$ for every $x\in \R$, we see that Corollary \ref{cor:uniform convergence outside jump set} (and Theorem \ref{thm:uniform convergence outside jump set}) fail if we do not require the set $K$ to be compact.
Moreover, Example \ref{ex:failure of convergence in given negligible set}
demonstrates that we need, in general, to discard a further set of small capacity
in order to obtain uniform convergence.

\paragraph{Acknowledgments.} The research was
funded by a grant from the Finnish Cultural Foundation. The author wishes to thank Giles Shaw for posing the question that led to this research, and Jan Kristensen for discussions on capacity and uniform convergence.

\noindent Address:\\

\noindent Department of Mathematical Sciences\\
4199 French Hall West\\
University of Cincinnati\\
2815 Commons Way\\
Cincinnati, OH 45221-0025\\
E-mail: {\tt lahtipk@ucmail.uc.edu}


\begin{thebibliography}{ACMM}

\bibitem{A1}L.~Ambrosio,
\textit{Fine properties of sets of finite perimeter in doubling metric measure spaces},
Calculus of variations, nonsmooth analysis and related topics.
Set-Valued Anal. 10 (2002), no. 2-3, 111--128.

\bibitem{AFP}L. Ambrosio, N. Fusco, D. Pallara,
\textit{Functions of bounded variation and free discontinuity problems.},
Oxford Mathematical Monographs. The Clarendon Press, Oxford University Press, New York, 2000.

\bibitem{AMP}L.~Ambrosio, M.~Miranda, Jr., and D.~Pallara,
\textit{Special functions of bounded variation in doubling metric measure spaces},
Calculus of variations: topics from the mathematical heritage of E. De Giorgi, 1--45,
Quad. Mat., 14, Dept. Math., Seconda Univ. Napoli, Caserta, 2004.

\bibitem{BB}A. Bj\"orn and J. Bj\"orn,
\textit{Nonlinear potential theory on metric spaces},
EMS Tracts in Mathematics, 17. European Mathematical Society (EMS), Z\"urich, 2011. xii+403 pp. 

\bibitem{EvaG92}L. C. Evans and R. F. Gariepy,
\textit{Measure theory and fine properties of functions},
Studies in Advanced Mathematics series, CRC Press, Boca Raton, 1992.

\bibitem{Fed}H. Federer,
\textit{Geometric measure theory},
Die Grundlehren der mathematischen Wissenschaften, Band 153 Springer-Verlag New York Inc., New York 1969 xiv+676 pp.

\bibitem{Giu84}E. Giusti,
\textit{Minimal surfaces and functions of bounded variation},
Monographs in Mathematics, 80. Birkh\"auser Verlag, Basel, 1984. xii+240 pp.

\bibitem{HKLS}H. Hakkarainen, R. Korte, P. Lahti, and N. Shanmugalingam,
\textit{Stability and continuity of functions of least gradient},
Anal. Geom. Metr. Spaces 3 (2015), 123--139. 

\bibitem{HK}J. Heinonen and P. Koskela,
\textit{Quasiconformal maps in metric spaces with controlled geometry},
Acta Math. 181 (1998), no. 1, 1--61.

\bibitem{KKST2}J. Kinnunen, R. Korte, N. Shanmugalingam, and H. Tuominen,
\textit{Lebesgue points and capacities via the boxing inequality in metric spaces},
Indiana Univ. Math. J. 57 (2008), no. 1, 401--430.

\bibitem{KKST3}J. Kinnunen, R. Korte, N. Shanmugalingam, and H. Tuominen,
\emph{Pointwise properties of functions of bounded variation on metric spaces},
Rev. Mat. Complut. 27 (2014), no. 1, 41--67. 

\bibitem{L3}P. Lahti,
\textit{A Federer-style characterization of sets of finite perimeter on metric spaces},
submitted.

\bibitem{L2}P. Lahti,
\textit{Strong approximation of sets of finite perimeter in metric spaces},
https://arxiv.org/abs/1611.06162

\bibitem{LaSh}P. Lahti and N. Shanmugalingam,
\textit{Fine properties and a notion of quasicontinuity for $\BV$ functions on metric spaces},
to appear in J. Math. Pures Appl.

\bibitem{M}M.~Miranda, Jr.,
\textit{Functions of bounded variation on ``good'' metric spaces},
J. Math. Pures Appl. (9)  82  (2003),  no. 8, 975--1004.

\bibitem{S}N.~Shanmugalingam,
\textit{Newtonian spaces: An extension of {S}obolev spaces to metric measure spaces},
Rev. Mat. Iberoamericana 16(2) (2000), 243--279.

\bibitem{Zie89}W. P. Ziemer,
\textit{Weakly differentiable functions. Sobolev spaces and functions of bounded variation}, Graduate Texts in Mathematics, 120. Springer-Verlag, New York, 1989. 

\end{thebibliography}
\end{document}